 \newtheorem{thm}{Theorem}[section]
 \newtheorem{cor}[thm]{Corollary}
 \newtheorem{lem}[thm]{Lemma}
 \newtheorem{prop}[thm]{Proposition}
 \theoremstyle{definition}
 \theoremstyle{remark}
 \newtheorem{rem}[thm]{Remark}
 \numberwithin{equation}{section}
\newcommand{\A}{A^{**}}
\begin{document}

\title[Linear maps behaving like derivations or anti-derivations...]
 {Linear maps behaving like derivations or anti-derivations at orthogonal elements on $ C^\star $-algebras}

\author{ B. Fadaee \quad  and \quad H. Ghahramani}
\thanks{{\scriptsize
\hskip -0.4 true cm \emph{MSC(2010)}: 46L05; 47B47; 46L57.
\newline \emph{Keywords}: derivation; anti-derivation; orthogonal elements; $C^{\star}$-algebra.\\}}

\address{Department of
Mathematics, University of Kurdistan, P. O. Box 416, Sanandaj,
Iran.}
\email{behroozfadaee@yahoo.com; b.fadaee@sci.uok.ac.ir}

\address{Department of
Mathematics, University of Kurdistan, P. O. Box 416, Sanandaj,
Iran.}
\email{h.ghahramani@uok.ac.ir; hoger.ghahramani@yahoo.com}


\address{}

\email{}

\thanks{}

\thanks{}

\subjclass{}

\keywords{}

\date{}

\dedicatory{}

\commby{}

\begin{abstract}
Let $A$ be a $ C^\star $-algebra and $\delta:A \rightarrow \A$ be a continuous linear map. We assume that $\delta$ acts like derivation or anti-derivation at orthogonal elements for several types of orthogonality conditions such as $ab=0$, $ab^{\star}=0$, $a^{\star}b=0$, $ab=ba=0$ and $ab^{\star}=b^{\star}a=0$. In each case, we characterize the structure of $\delta$. Then we apply our results for von Neumann algebras and unital simple $C^{\star}$-algebras.
\end{abstract}

\maketitle
\section{ Introduction}
Algebras and vector spaces in this paper are assumed to be those over the complex field $\mathbb{C}$. Let $A$ and $M$ be an algebra and an $A$-bimodule, respectively. Recall that a linear map $d:A\rightarrow M$ is said to be a \textit{derivation} if $d(ab) = ad(b)+d(a)b $ for all $a, b \in A$. Also, $d$ is called inner derivation if for some $m \in M$, $d$ takes the form $d(a)=am-ma$ for all $a \in A$. We also call $d$ \textit{anti-derivation} if $d(ab) =bd(a)+d(b)a$ for all $a,b \in A$. Derivation is an important field of research, and have been studied and applied extensively both in theory and applications. For this and related topics, see \cite{da, gh1, gh12} among others. 
\par 
There are a number of papers investigating the conditions under which mappings of (Banach) algebras are thoroughly determined by actions on some sets of points. We refer the reader to \cite{al1, bre, chb, gh1, gh2, h5, h6, gh5} for a full account of the topic and a list of references. The following condition has been drawing many researchers attention working in this field:
\[a, b \in A, \,\, ab=z\Rightarrow \delta(ab) = a\delta(b)+\delta(a)b \quad (\blacklozenge),\]
where $z\in A$ is fixed and $\delta:A\rightarrow M$ is a linear (additive) map. 
Bre$\check{\textrm{s}}$ar, \cite{bre} studied the derivations of rings with idempotents with $z=0$. In \cite{bre} it was demonstrated that if $A$ is a prime ring containing a non-trivial idempotent and $\delta:A\rightarrow A$ is an additive map satisfying $(\blacklozenge)$ with $z=0$, then $\delta(a)=d(a)+ca$ ($a\in A$) where $d$ is an additive derivation and $c$ is a central element of $A$. Note that the nest algebras are important operator algebras that are not prime. Jing et al. in \cite{jin} showed that, for the cases of nest algebras on a Hilbert space and standard operator algebras in a Banach space, the set of linear maps satisfying $(\blacklozenge)$ with $z=0$ and $\delta(I)=0$ coincides with the set of inner derivations. Then, many studies have been done in this case and different results were obtained, for instance, see \cite{al1, al10, bre, chb, ess, gh1, gh12, zhang1, zh1, zh3, zh4} and the references therein. Recently, in \cite{che} the additive maps on a prime ring acting on some orthogonality condition are described where the ring has an involution and non-trivial idempotents. Also in \cite{al10} the authors considered the subsequent condition on a continuous linear map $\delta$ from a $C^\star$-algebra $A$ into an essential Banach $A$-bimodule $M$: 
\[ a,b\in A, \quad ab=ba=0 \Longrightarrow a\delta(b)+\delta(a)b+b\delta(a)+\delta(b)a=0,\] 
and they showed that there exist a derivation $d:A\rightarrow M$ and a bimodule homomorphism $\Phi:A\rightarrow M$ such that $\delta=d+\Phi$. Motivated by these reasons, in this paper we consider the problem of characterizing continuous linear maps on $C^\star$-algebras behaving like derivations or anti-derivations at orthogonal elements for several types of orthogonality conditions. 
\par 
In this paper we consider the problem of characterizing continuous linear maps behaving like derivations or anti-derivations at orthogonal elements for several types of orthogonality conditions on $C^\star$-algebras. In particular, in this paper we consider the subsequent conditions on a continuous linear map $\delta:A\rightarrow \A$ where $A$ is a  $C^\star$-algebra:
\begin{enumerate}
\item[(i)] \textit{derivations through one-sided orthogonality conditions}
 \[ ab=0 \Longrightarrow a\delta(b)+\delta(a)b=0 ;\]
 \[ab^{\star}=0 \Longrightarrow a\delta(b)^{\star}+\delta(a)b^{\star}=0 ;\]
 \[a^{\star}b=0 \Longrightarrow a^{\star}\delta(b)+\delta(a)^{\star}b=0 ;\]
\item[(ii)] \textit{anti-derivations through one-sided orthogonality conditions}
 \[ ab=0 \Longrightarrow b\delta(a)+\delta(b)a=0 ;\]
 \[ ab^{\star}=0 \Longrightarrow \delta(b)^{\star}a+b^{\star}\delta(a)=0;\]
 \[a^{\star}b=0 \Longrightarrow \delta(b)a^{\star}+b\delta(a)^{\star}=0;\]
\item[(iii)] \textit{Derivations through two-sided orthogonality conditions}
 \[ab=ba=0 \Longrightarrow a\delta(b)+\delta(a)b=b\delta(a)+\delta(b)a=0;\]
\[ ab^{\star}=b^{\star}a=0 \Longrightarrow a\delta(b)^{\star}+\delta(a)b^{\star}=\delta(b)^{\star}a+b^{\star}\delta(a)=0;\]
\end{enumerate}
where $a,b \in A$. Our purpose is to investigate whether the above conditions characterize 
continuous derivations ($\star$-derivations) or continuous anti-derivations ($\star$-anti-derivations) on $C^\star$-algebras. Also we give applications of our results for von Neumann algebras and unital simple $C^{\star}$-algebras.
\par 
The following are the notations and terminologies which are used throughout this article.
\par 
Let $A$ be a $C^\star$-algebra. For $\mu , \nu \in \A$, we denote by $\mu\nu$ the first Arens product. In view of the fact that any $C^\star$-algebra is Arens regular, the product $\mu\nu$ in $\A$ coincide with the second Arens product. By \cite[Theorem 2.6.17]{da} the product $\mu\nu$ in $\A$ is separately continuous with respect to the weak$^*$ topology ($\sigma(\A,A^{*})$) on $\A$. The Banach algebra $\A$ is unital by this product and the unity of $\A$ is denoted by $e$. 
\par 
If a net $(\lambda_{i})_{i\in I}$ in $\A$ converges to $\lambda \in \A$ with respect to the weak$^{*}$ topology, we write it by $\lambda_{i}\xlongrightarrow{\sigma(\A,A^{*})} \lambda$.
\begin{rem}\label{11}
Let $A$ be a $ C^\star $-algebra, and let $\mu \in \A$. If $\mu   A =\lbrace0\rbrace$ or $A \mu=\lbrace0\rbrace$, then $\mu =0$. 
\begin{proof}
By Goldstine Theorem there is a net $(a_{i})_{i\in I}$ in $A$ such that $a_{i}\xlongrightarrow{\sigma(\A,A^{*})} e$, where $e$ is the unity of $\A$. Let $\mu A= \lbrace0\rbrace$. By separately $w^{*}$-continuity of product in $\A$ we have $\mu a_{i}\xlongrightarrow{\sigma(\A,A^{*})} \mu e=\mu$. Hence $\mu=0$. Similarly, we can show that $A \mu=\lbrace0\rbrace$ implies $\mu=0$.
\end{proof}
\end{rem}
We note that the centre of an algebra $A$ is written by $\mathcal{Z}(A)$.
\begin{rem}\label{12}
Let $A$ be a $ C^\star $-algebra, and let $\mu \in \A$.
Suppose that $\mu   a= a   \mu$ for each $a \in A$. Then $\mu \in \mathcal{Z}(\A)$.
\begin{proof}
 Let $ \nu \in \A $. We will show that $ \mu \nu = \nu \mu $. By Goldstine Theorem there is a net $(a_{i})_{i\in I}$ in $ A $ such that $ a_{i} \xlongrightarrow{\sigma (\A, A^*)} \nu $. By separately $w^{*}$-continuity of product in $\A$ we have
\[  \mu a_{i} \xlongrightarrow{\sigma (\A, A^*)} \mu\nu \quad \text{and} \quad a_{i}\mu \xlongrightarrow{\sigma (\A, A^*)} \nu \mu  .\]
By applying assumption, $ \mu \nu = \nu \mu $.
\end{proof}
\end{rem}
\begin{rem}\label{13}
Let $A$ be $ C^\star $-algebra and $(u_{i})_{i\in I}$ be a bounded approximate identity of $A$. Since $(u_{i})_{i\in I}$ is bounded, by Banach-Alaoglu Theorem we can assume that it converges to some $\mu \in \A$ with respect to the weak$^{*}$ topology. So by separately $w^{*}$-continuity of product in $\A$ we have $u_{i} a\xlongrightarrow{\sigma (\A, A^*)} \mu a$ for all $a\in A$. On the other hand by the fact that $(u_{i})_{i\in I}$ is an approximate identity, for each $a\in A$ we get $u_{i} a\xlongrightarrow{\sigma (\A, A^*)} a$ in $\A$. So $(\mu-e)A=\lbrace 0 \rbrace $ and by Remark \ref{11}, it follows that $\mu=e$. Therefore we can assume that the $ C^\star $-algebra $A$ has a bounded approximate identity such that $u_{i}\xlongrightarrow{\sigma (\A, A^*)} e$ in $\A$.
\end{rem}
Let $h :A\rightarrow\A$ be a map. We say that $h$ is a $\star$-map whenever $h(a^{\star})=h(a)^{\star}$ for all $a\in A$.
\section{Derivations and anti-derivations at orthogonality product}
In this section we will consider a continuous linear map on a $ C^{\star}  $-algebra behaving like derivation or anti-derivation at one-sided orthogonality conditions. \par 
In order to prove our results we need the following result for continuous bilinear maps on  $ C^{\star}  $-algebras.
\begin{lem} (Alaminos et al. \cite{al1}).\label{l21}
Let $A$ be a $ C^{\star}  $-algebra, let $ X $ be a Banach space, and let $ \phi: A \times A \rightarrow X $ be a continuous bilinear map such that $ \phi (a, b) =0 $ whenever $ a,b \in A $ are such that $ ab=0 $, Then $ \phi ( ab ,c  ) = \phi (a, bc) $ for all
$ a,b , c \in A $. Also there is a continuous linear map $ \Phi: A \rightarrow X $ such that $ \phi (a, b ) = \Phi (ab) $  for all
$ a,b  \in A $.
\end{lem}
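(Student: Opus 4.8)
The plan is to split the statement into two parts: first the \emph{associativity} identity $\phi(ab,c)=\phi(a,bc)$, and then the factorization $\phi(a,b)=\Phi(ab)$ through the multiplication. The second part will be a soft consequence of the first, so I would concentrate the real work on the associativity identity.

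For the associativity identity I would start from the observation that it is immediate at idempotents, with \emph{no} unit required. Indeed, if $p\in A$ satisfies $p^{2}=p$, then $ap(c-pc)=apc-ap^{2}c=0$ and $(a-ap)pc=apc-ap^{2}c=0$ for all $a,c\in A$; applying the hypothesis $\phi(x,y)=0$ whenever $xy=0$ to each of these pairs gives $\phi(ap,c)=\phi(ap,pc)$ and $\phi(a,pc)=\phi(ap,pc)$, whence $\phi(ap,c)=\phi(a,pc)$. Since a general $C^{\star}$-algebra need not contain enough idempotents, the next step is to transport this to all of $A$ by passing to the enveloping von Neumann algebra $\A$, where the linear span of projections is norm-dense. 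Concretely, I would produce a separately weak$^{*}$-continuous extension $\tilde\phi:\A\times\A\to X^{**}$ of $\phi$, run the same idempotent computation at each projection $q\in\A$ (using $aq(e-q)c=0$ and $a(e-q)qc=0$) to obtain $\tilde\phi(aq,c)=\tilde\phi(a,qc)$, and then fix $a,c\in A$: the map $b\mapsto\tilde\phi(ab,c)-\tilde\phi(a,bc)$ is norm-continuous and linear in $b$ and vanishes on all projections, hence on their norm-closed span $\A$, and in particular on $A$. This yields $\phi(ab,c)=\phi(a,bc)$.

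The hard part is precisely this transport step, and it is where the genuine $C^{\star}$-structure enters. One must not only construct the separately weak$^{*}$-continuous extension $\tilde\phi$ (which is why Arens regularity, i.e. the separate weak$^{*}$-continuity of the product on $\A$ recorded before Remark \ref{11}, is indispensable), but above all verify that the vanishing hypothesis \emph{survives} on $\A$ for the orthogonal pairs $\big(aq,(e-q)c\big)$ and $\big(a(e-q),qc\big)$ occurring in the computation. Naively approximating $q$ weak$^{*}$ by elements of $A$ does not help, since such approximants are not idempotents and the corresponding products are not zero; so I would isolate this as a separate lemma and prove it using Kaplansky density together with the fact that $\A$ is a von Neumann algebra generated by its projections. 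This lemma is the technical heart on which the whole argument rests.

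Finally, granting the associativity identity, I would build $\Phi$ from a bounded approximate identity $(u_{i})_{i\in I}$ of $A$ with $u_{i}\xlongrightarrow{\sigma(\A,A^{*})}e$ (Remark \ref{13}) and Cohen's factorization theorem, by which every $x\in A$ factors as $x=yz$. Setting $\Phi(x)=\lim_{i}\phi(u_{i},x)$, the associativity identity gives $\phi(u_{i},yz)=\phi(u_{i}y,z)\to\phi(y,z)$, so the limit exists; and for arbitrary $a,b\in A$ the same computation yields $\Phi(ab)=\lim_{i}\phi(u_{i}a,b)=\phi(a,b)$. Linearity is inherited from $\phi$, and boundedness from $\|\Phi(x)\|\le\|\phi\|\,\big(\sup_{i}\|u_{i}\|\big)\,\|x\|$, which completes the proof.
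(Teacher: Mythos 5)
First, note that the paper itself gives no proof of Lemma \ref{l21}: it is quoted verbatim from Alaminos, Bre\v{s}ar, Extremera and Villena \cite{al1}, so there is no in-paper argument to compare yours against. Judged on its own terms, your proposal gets the easy parts right --- the idempotent computation $\phi(ap,c)=\phi(ap,pc)=\phi(a,pc)$ is correct, and the final construction of $\Phi$ from a bounded approximate identity together with Cohen factorization is a standard and valid way to pass from the associativity identity to the factorization $\phi(a,b)=\Phi(ab)$ --- but the core of the argument is missing. The ``separate lemma'' you defer to, namely that a separately weak$^{*}$-continuous extension $\tilde\phi:\A\times\A\to X^{**}$ still vanishes on orthogonal pairs such as $\bigl(aq,(e-q)c\bigr)$ for projections $q\in\A$, is essentially equivalent to the conclusion of Lemma \ref{l21} itself (once $\phi(a,b)=\Phi(ab)$ is known, the extension is $\Phi^{**}(\mu\nu)$ and the vanishing is obvious; before that, it is not). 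Kaplansky density does not deliver it: it produces bounded nets in $A$ converging to $q$ in the strong$^{*}$ topology, but these approximants are not idempotent, the corresponding products in $A$ are not zero, and so the hypothesis on $\phi$ is never applicable to them --- which is exactly the obstruction you yourself flag and then do not overcome. There is a secondary problem as well: the Arens-type extension of an arbitrary bounded bilinear map $\phi:A\times A\to X$ is weak$^{*}$-continuous in each variable only in a restricted, order-dependent sense; full separate weak$^{*}$-continuity on $\A\times\A$ is a regularity property that holds for the multiplication of a $C^{\star}$-algebra but is not automatic for $\phi$.

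For the record, the actual proof in \cite{al1} avoids $\A$ entirely: one first establishes the identity for commutative $C^{\star}$-algebras $C_{0}(\Omega)$ by a local (partition-of-unity/regularization) argument, and then reduces the general case to the commutative one by fixing $a,c\in A$, taking $b$ self-adjoint, and applying the commutative result to the bilinear map $(f,g)\mapsto\phi\bigl(af(b),g(b)c\bigr)$ on $C(I)\times C(I)$, where $I$ contains the spectrum of $b$; the general $b$ then follows by decomposing into real and imaginary parts. This is precisely the device the present paper reuses in the proof of its Lemma \ref{l32}, so you can see the intended mechanism there. To repair your write-up you would need either to supply that commutative case and the functional-calculus reduction, or to find a genuinely different proof of the extension lemma you postulate; as it stands, the technical heart of the argument is asserted rather than proved.
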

It should be noted that in the above lemma if $A$ is a commutative $ C^{\star}  $-algebra, then $\phi$ is symmetric, that is $\phi(a,b)=\phi(b,a)$ for all $a,b\in A$. 
\par 
Now, we characterize continuous linear maps on  $ C^{\star}  $-algebras behaving like derivations through one-sided orthogonality conditions. 
\begin{thm}\label{o1}
Let $A$ be a $ C^{\star}  $-algebra, and let $\delta: A \rightarrow \A$ be a continuous linear map.
\begin{enumerate}
\item[(i)] $ \delta $ satisfies 
\begin{equation*}
 a b=0 \Longrightarrow a   \delta(b)+\delta(a)   b =0\quad (a , b \in A).
\end{equation*}
if and only if there is a continuous derivation $ d: A \rightarrow \A $ and an element $ \eta \in \mathcal{Z}(\A) $ such that $\delta(a)= d(a) + \eta    a  $ for all $ a \in A$. 
\item[(ii)] $ \delta $ satisfies 
\begin{equation*}
 a b ^{\star}=0 \Longrightarrow a   \delta(b)^{\star}+\delta(a)   b ^{\star}=0\quad (a , b \in A).
 \end{equation*}
if and only if there is a continuous $\star$-derivation $ d: A \rightarrow \A $ and an element $ \eta \in \A $ such that $\delta(a)= d(a) + \eta    a  $ for all $ a \in A$. 
\end{enumerate}
\end{thm}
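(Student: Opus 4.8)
The plan is to dispatch the ``if'' directions by direct computation and to obtain the ``only if'' directions from the bilinear-map machinery of Lemma~\ref{l21}. For the easy directions: in (i), if $\delta(a)=d(a)+\eta a$ with $d$ a derivation and $\eta\in\mathcal{Z}(\A)$, then whenever $ab=0$ one computes $a\delta(b)+\delta(a)b=d(ab)+\eta ab=0$, using $a\eta=\eta a$. In (ii), writing $d(b)^\star=d(b^\star)$, one gets $a\delta(b)^\star+\delta(a)b^\star=d(ab^\star)+ab^\star\eta^\star+\eta ab^\star$, which vanishes when $ab^\star=0$. I would record these first.

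For the ``only if'' part of (i), I would introduce the map $\phi:A\times A\to\A$, $\phi(a,b)=a\delta(b)+\delta(a)b$. It is genuinely bilinear and continuous (by continuity of $\delta$ and submultiplicativity of the norm on $\A$), and the hypothesis is exactly $\phi(a,b)=0$ whenever $ab=0$. Lemma~\ref{l21} then yields a continuous linear $\Phi:A\to\A$ with $a\delta(b)+\delta(a)b=\Phi(ab)$. To identify $\Phi$ I would insert a bounded approximate identity $(u_i)$ with $u_i\xlongrightarrow{\sigma(\A,A^{\star})}e$ (Remark~\ref{13}); since $(\delta(u_i))$ is bounded, Banach--Alaoglu lets me pass to a subnet with $\delta(u_i)\xlongrightarrow{\sigma(\A,A^{\star})}\xi$. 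Taking $a=u_i$ and letting $i$ run gives $\Phi(b)=\delta(b)+\xi b$, while taking $b=u_i$ gives $\Phi(a)=\delta(a)+a\xi$; comparing forces $\xi a=a\xi$ for all $a\in A$, so $\xi\in\mathcal{Z}(\A)$ by Remark~\ref{12}. With $\eta=\xi$ and $d=\delta-\eta(\cdot)$, rewriting $\Phi(ab)$ as $\delta(ab)+\eta ab=a\delta(b)+\delta(a)b$ gives $d(ab)=ad(b)+d(a)b$ after a short computation, and continuity of $d$ is clear.

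For (ii) the difficulty is that $(a,b)\mapsto a\delta(b)^\star+\delta(a)b^\star$ is conjugate-linear in $b$, so Lemma~\ref{l21} does not apply directly. I would remove the conjugation by setting $\delta^{\sharp}(a)=\delta(a^\star)^\star$ (a continuous linear map) and working with the bilinear map $\psi(a,b)=a\delta^{\sharp}(b)+\delta(a)b$; replacing $b$ by $b^\star$ in the hypothesis shows $\psi(a,b)=0$ whenever $ab=0$. Lemma~\ref{l21} gives continuous linear $\Psi$ with $a\delta^{\sharp}(b)+\delta(a)b=\Psi(ab)$, and the same approximate-identity argument---now with a self-adjoint $(u_i)$ and $\xi=\lim\delta(u_i)$, $\zeta=\lim\delta^{\sharp}(u_i)$ along a common subnet---produces $\Psi(a)=\delta^{\sharp}(a)+\xi a=\delta(a)+a\zeta$. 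Setting $\eta=\xi$ and $d=\delta-\eta(\cdot)$, the relation $\delta^{\sharp}(a)-\delta(a)=a\zeta-\xi a$ together with $\Psi(ab)=\delta^{\sharp}(ab)+\xi ab$ again yields $d(ab)=ad(b)+d(a)b$.

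The crux is to prove that this $d$ is a $\star$-map. Computing $d(a^\star)^\star=\delta^{\sharp}(a)-a\xi^\star$, the desired equality $d(a^\star)^\star=d(a)$ reduces, via the relation above, to $a(\zeta-\xi^\star)=0$ for all $a\in A$, hence to $\zeta=\xi^\star$ by Remark~\ref{11}. I expect this identification $\zeta=\xi^\star$ to be the main technical point. With $(u_i)$ self-adjoint one has $\delta^{\sharp}(u_i)=\delta(u_i)^\star$, so the claim is precisely that the involution on $\A$ is weak$^\star$ continuous; I would justify this directly from $\langle\mu^\star,f\rangle=\overline{\langle\mu,f^\star\rangle}$, where $f^\star(a)=\overline{f(a^\star)}$, which shows $\mu_i\to\mu$ weak$^\star$ implies $\mu_i^\star\to\mu^\star$ weak$^\star$. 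Granting this, $d$ is a continuous $\star$-derivation and $\delta=d+\eta(\cdot)$ with $\eta\in\A$, as required.
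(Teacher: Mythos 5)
Your proof is correct, and it runs on the same engine as the paper's --- Lemma~\ref{l21} together with a bounded approximate identity $(u_i)$ whose images $\delta(u_i)$ are pushed to a weak$^{*}$ limit, followed by the substitution $d=\delta-\eta(\cdot)$ --- but the execution differs in two places worth recording. For part (i) the paper gives no argument of its own: it simply cites \cite[Theorem 4.6]{al1} together with Remark~\ref{11}; your direct derivation (factor $\phi(a,b)=\Phi(ab)$, identify $\Phi(c)$ once as $\delta(c)+\eta c$ and once as $\delta(c)+c\eta$, then invoke Remark~\ref{12} to place $\eta$ in $\mathcal{Z}(\A)$) is a legitimate self-contained substitute. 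For part (ii) the paper subtracts $\eta a$ at the outset and works with the three-variable identity $\phi(ab,c)=\phi(a,bc)$ for $\phi(a,b)=a\,d(b^{\star})^{\star}+d(a)b$; two successive substitutions $a=u_i$ then yield first $d(ab^{\star})=d(a)b^{\star}+a\,d(b)^{\star}$ and then $d(b^{\star})=d(b)^{\star}$, and at no point does that argument need the involution of $\A$ to be weak$^{*}$ continuous, since adjoints are only applied to already-established identities. Your route, which uses the factorization $\psi(a,b)=\Psi(ab)$ for $\psi(a,b)=a\delta^{\sharp}(b)+\delta(a)b$ and identifies $\Psi$ in two ways, genuinely requires the weak$^{*}$ continuity of the involution to conclude $\zeta=\xi^{\star}$; you correctly isolate and justify this extra ingredient (via $\langle\mu^{\star},f\rangle=\overline{\langle\mu,f^{\star}\rangle}$, which is the canonical involution on the bidual of a $C^{\star}$-algebra), so the argument closes. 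The only blemish is cosmetic: in the ``if'' direction of (i) the cross terms actually give $d(ab)+2\eta ab$ rather than $d(ab)+\eta ab$, which is harmless since $ab=0$ there.
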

\begin{proof}
$(i)$ By \cite [Theorem 4.6]{al1} and Remark \ref{11}, there is a continuous derivation $ d : A  \rightarrow \A$ and an element $\eta \in \mathcal{Z}( \A)$ such that $\delta(a)=d(a)+\eta   a $ for all $a \in A $. The converse is proved easily.
\par 
$(ii)$ Suppose that $(u_i)_{i\in I}$ is a bounded approximate identity of $A$ such that $u_{i}\xlongrightarrow{\sigma (\A, A^*)} e $, where $e$ is the identity of $\A$. Since the net $(\delta(u_{i}))_{i\in I}$ is bounded, we can assume that it converges to some $\eta \in \A$ with respect to the weak$^{*}$ topology. Define $d: A \rightarrow \A$ by $d(a)=\delta(a)-\eta a $. Then $d$ is a continuous linear map which satisfies 
\begin{equation}\label{1o4}
 ab^{\star}=0 \Longrightarrow a d(b)^{\star}+d(a) b^{\star}=0\quad (a, b \in A),
 \end{equation}
and $d(u_{i})\xlongrightarrow{\sigma (\A, A^*)} 0$. We will show that $d$ is a  $\star$-derivation. In order to prove this we consider the continuous bilinear map $\phi: A  \times A \rightarrow \A$ by $\phi(a , b )=a   d(b^{\star})^{\star}+d(a)    b $. If $ a , b \in A$ are such that $ a b =0$, then $a (b^{\star})^{\star}=0$ and \eqref{1o4} gives $\phi( a , b )=0$. So by Lemma \ref{l21}, we get $ \phi (a b ,c)=\phi (a,bc)$ for all $a , b , c \in A$. Therefore
\begin{equation}\label{1o5}
a b  d(c^{\star})^{\star}+d(ab) c=a  d(c^{\star} b^{\star})^{\star}+d(a)   bc,
 \end{equation}
 for all $a , b , c \in A$. On account of \eqref{1o5}, for all $ b , c \in A $ we have
 \[u_{i} b   d(c^{\star})^{\star}+d(u_{i}b)   c=u_{i}   d (c^{\star} b ^{\star})^{\star}+d(u_{i})   b c.\]
From continuity of $d$, we get $u_{i}    d(c^{\star} b ^{\star})^{\star}+d(u_{i}) b c  $ converges to $ b   d(c^{\star})^{\star}+d (b)   c$ with respect to the norm topology. On the other hand, from separately $w^{*}$-continuity of product in $\A$ and $d(u_{i})\xlongrightarrow{\sigma (\A, A^*)} 0$ it follows that
$u_{i}   d (c^{\star} b^{\star})^{\star}+d(u_{i})   b c \xlongrightarrow{\sigma (\A, A^*)} d(c^{\star}   b ^{\star})^{\star}$. Hence 
\begin{equation}\label{1o6}
d( a b^{\star})=d(a)   b ^{\star}+ a   d(b)^{\star},
 \end{equation}
for all $a , b  \in A $. Now letting $ a =u_{i}$ in \eqref{1o6}, we obtain
\[ d (u_{i} b ^{\star})= d (u_{i})   b^{\star}+u_{i}   d(b)^{\star},\]
for all $ b \in A$. By continuity of $d$, $d(u_{i})\xlongrightarrow{\sigma (\A, A^*)} 0$ and using similar arguments as above it follows that $d(b^{\star})=d(b)^{\star}$ for all $b\in A$. Thus from \eqref{1o6}, $d$ is a $\star$-derivation.
\par 
The converse is proved easily.
\end{proof}

\begin{cor} \label{c32}
Let $A$ be a $ C^{\star} $-algebra, and let $\delta: A \rightarrow \A$ be a continuous linear map. Then
\begin{equation*}
 a^{\star} b=0 \Longrightarrow a^{\star}   \delta(b)+\delta(a)^{\star}   b=0\quad (a , b \in A).
 \end{equation*}
if and only if there is a continuous $\star$-derivation $ d: A \rightarrow \A $ and an element $ \eta \in \A $ such that $\delta(a)= d(a) + a \eta  $ for all $ a \in A$. 
\end{cor}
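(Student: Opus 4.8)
The plan is to reduce the statement to Theorem \ref{o1}(ii) by twisting $\delta$ with the involution. Recall that the involution of $A$ extends to an isometric conjugate-linear anti-automorphism of $\A$, which I again write $\star$. Define $\psi:A\rightarrow\A$ by $\psi(a)=\delta(a^{\star})^{\star}$. Since the two conjugations cancel on scalars, $\psi$ is linear, and it is continuous because $\delta$ is continuous and $\star$ is isometric. The whole argument is then bookkeeping with this twist.

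First I would verify that $\psi$ satisfies the hypothesis of Theorem \ref{o1}(ii). Suppose $a,b\in A$ with $ab^{\star}=0$. Put $c=a^{\star}$ and $f=b^{\star}$, so that $c^{\star}f=ab^{\star}=0$; applying the hypothesis of the corollary to the pair $(c,f)$ gives $c^{\star}\delta(f)+\delta(c)^{\star}f=0$, that is, $a\,\delta(b^{\star})+\delta(a^{\star})^{\star}b^{\star}=0$. Since $\psi(b)^{\star}=\delta(b^{\star})$ and $\psi(a)=\delta(a^{\star})^{\star}$, this reads exactly $a\,\psi(b)^{\star}+\psi(a)\,b^{\star}=0$, which is precisely the hypothesis of Theorem \ref{o1}(ii) for $\psi$. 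Hence there is a continuous $\star$-derivation $d':A\rightarrow\A$ and an element $\eta'\in\A$ with $\psi(a)=d'(a)+\eta' a$ for all $a\in A$.

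Next I would translate back through $\delta(a)=\psi(a^{\star})^{\star}$, obtaining $\delta(a)=d'(a^{\star})^{\star}+a\,(\eta')^{\star}$. Setting $d(a)=d'(a^{\star})^{\star}$ and $\eta=(\eta')^{\star}\in\A$ yields the desired form $\delta(a)=d(a)+a\eta$, so it only remains to check that $d$ is a continuous $\star$-derivation. Continuity and linearity are immediate; the derivation identity follows from $d(ab)=d'\big((ab)^{\star}\big)^{\star}=d'(b^{\star}a^{\star})^{\star}=\big(d'(b^{\star})a^{\star}+b^{\star}d'(a^{\star})\big)^{\star}=a\,d(b)+d(a)\,b$, using that $d'$ is a derivation and $\star$ reverses products; and the $\star$-property $d(a^{\star})=d(a)^{\star}$ reduces to $d'(a^{\star})=d'(a)^{\star}$, which holds because $d'$ is a $\star$-map. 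For the converse, a direct computation suffices: when $a^{\star}b=0$ the terms $a^{\star}b\eta$ and $\eta^{\star}a^{\star}b$ vanish, leaving $a^{\star}d(b)+d(a^{\star})b=d(a^{\star}b)=0$.

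I do not expect a genuinely hard step, since this is a reduction to the already-proved Theorem \ref{o1}(ii). The only point demanding care is tracking the involutions so that the one-sided product $a^{\star}b$ of the corollary is converted into the adjoint product $ab^{\star}$ of Theorem \ref{o1}(ii), and so that after untwisting the extra element multiplies $a$ on the correct (right) side while $d$ remains a bona fide $\star$-derivation.
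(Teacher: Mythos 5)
Your proposal is correct and follows essentially the same route as the paper: both define the twisted map $a\mapsto\delta(a^{\star})^{\star}$, check that it satisfies the hypothesis of Theorem \ref{o1}(ii), and then untwist to obtain $\delta(a)=d(a)+a\eta$ with $\eta$ the adjoint of the element produced by that theorem. You merely spell out the involution bookkeeping (and the easy converse) that the paper leaves implicit.
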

\begin{proof}
Consider the continuous linear map $ \tau : A \rightarrow \A $ defined by $ \tau (a) = \delta (a^{\star})^{\star} $. 
It is easily seen that this map satisfies the conditions of Theorem \ref{o1}-$(ii)$. 
So there exists a continuous $ \star $-derivation $ d : A \rightarrow \A $ and an element $ \eta_1 \in \A $ such that
 $ \tau (a) = d(a) + \eta_1    a  $ for all $ a \in A $. Then $ \delta(a)= d(a) + a    \eta  $ for all 
$ a \in A$, where $ \eta = \eta^{\star}_1 $.

The converse is proved easily.
\end{proof}
Note that in above corollary and part $(ii)$ of Theorem \ref{o1}, it is not necessary true that $ \eta \in \mathcal{Z} ( \A) $. For example suppose that $ \eta $ is not in $ \mathcal{Z} ( \A) $ and define $ \delta : A \rightarrow \A $ by $ \delta (a) = \eta a $. Then $ \delta $ satisfies in conditions of Theorem \ref{o1} and $ \delta $ equals to sum of the zero derivation and $ \eta a $, but $ \eta $ is not belong to $ \mathcal{Z} ( \A) $.
\par 
\begin{rem}\label{der}
Let $A$ be a $C^\star$-algebra and $d: A\rightarrow A $ be an inner derivation where $d(a)=a\mu - \mu a$ for some $\mu\in A$. If $d$ is a $\star$-map, then $a^{\star}\mu-\mu a^{\star}=\mu^{\star}a^{\star}-a^{\star}\mu^{\star}$ for all $a\in A$. So $Re\mu=\dfrac{1}{2}(\mu+\mu^{\star})\in \mathcal{Z}(A)$. Conversely for $\mu\in A$ with $Re\mu\in \mathcal{Z}(A)$, the map $d: A\rightarrow A $ defined by $d(a)=a\mu - \mu a$ is a $\star$-inner derivation. 
\end{rem}
\par 
If $ A $ is a von Neumann algebra or a simple $ C^{\star} $-algebra with unity, then by \cite[Theorem 4.1.6]{sak} and \cite[Theorem 4.1.11]{sak} every derivation $ d: A \rightarrow A $ is an inner derivation. In view of these results, we have the next proposition.
\begin{prop}
Let $ A $ be a von Neumann algebra or a simple $ C^{\star} $-algebra with unity. Suppose that $ \delta : A \rightarrow A $ is a continuous linear map. Then 
\begin{enumerate}
\item[(i)]
$ \delta $ satisfies 
\begin{equation*}
 a b=0 \Longrightarrow a \delta(b)+\delta(a)  b =0\quad (a , b \in A)
\end{equation*}
if and only if there are elements $\mu, \nu \in A$ such that $\delta(a)=a  \mu-\nu a $ for all $ a \in A$ and $\mu-\nu\in \mathcal{Z}(A)$.
\item[(ii)]
$ \delta $ satisfies
\begin{equation*}
 a b ^{\star}=0 \Longrightarrow a  \delta(b)^{\star}+\delta(a) b ^{\star}=0\quad (a , b \in A),
 \end{equation*}
if and only if there are elements $\mu, \nu\in A$ such that $\delta(a)=a  \mu-\nu b $ for all $a \in A$ and $Re\mu \in \mathcal{Z}(A)$.
\item[(iii)]
$ \delta $ satisfies
\begin{equation*}
 a^{\star} b =0 \Longrightarrow a^{\star} \delta(b)+\delta(a)^{\star}  b =0\quad (a , b \in A)
 \end{equation*}
if and only if there are elements $\mu, \nu\in A $ such that $\delta(a)=a  \nu-\mu a $ for all $a  \in A $ and $Re\mu\in \mathcal{Z}(A)$.
\end{enumerate}
\end{prop}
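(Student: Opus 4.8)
The plan is to bootstrap from the abstract characterizations already obtained---Theorem \ref{o1} and Corollary \ref{c32}---and then invoke the fact, recorded just above the statement, that on a von Neumann algebra or a unital simple $C^{\star}$-algebra every derivation $d:A\rightarrow A$ is inner (\cite[Theorem 4.1.6]{sak}, \cite[Theorem 4.1.11]{sak}). Concretely, the hypothesis of part $(i)$ is exactly the condition of Theorem \ref{o1}-$(i)$, so there are a continuous derivation $d:A\rightarrow\A$ and $\eta\in\mathcal{Z}(\A)$ with $\delta(a)=d(a)+\eta a$; parts $(ii)$ and $(iii)$ feed analogously into Theorem \ref{o1}-$(ii)$ and Corollary \ref{c32}, producing a continuous $\star$-derivation $d:A\rightarrow\A$ and an element $\eta\in\A$ with $\delta(a)=d(a)+\eta a$, respectively $\delta(a)=d(a)+a\eta$.

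The first genuine step is to upgrade these from $\A$-valued to $A$-valued data, so that Sakai's theorem can be applied. Here I would use that $A$ is unital (automatic for a von Neumann algebra, assumed in the simple case) together with the fact that any derivation kills the unit, $d(1)=0$. Evaluating $\delta(a)=d(a)+\eta a$ at $a=1$ gives $\eta=\delta(1)$, which lies in $A$ because $\delta$ takes values in $A$; the same evaluation handles the $a\eta$ form of part $(iii)$. Consequently $d(a)=\delta(a)-\eta a\in A$ for all $a\in A$, so $d:A\rightarrow A$ is a genuine continuous derivation and hence inner, $d(a)=a\mu_{0}-\mu_{0}a$ for some $\mu_{0}\in A$. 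In part $(i)$ the extra information $\eta\in\mathcal{Z}(\A)$ combines with $\eta\in A$ to give $\eta\in\mathcal{Z}(A)$; in parts $(ii)$ and $(iii)$ the derivation $d$ is moreover a $\star$-map, so Remark \ref{der} yields $Re\mu_{0}\in\mathcal{Z}(A)$.

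It then remains to recombine algebraically into the asserted shapes. For $(i)$, $\delta(a)=a\mu_{0}-\mu_{0}a+\eta a=a\mu-\nu a$ with $\mu=\mu_{0}$ and $\nu=\mu_{0}-\eta$, whence $\mu-\nu=\eta\in\mathcal{Z}(A)$. For $(ii)$ the same rewriting gives $\delta(a)=a\mu-\nu a$ with $\mu=\mu_{0}$, $\nu=\mu_{0}-\eta$ and $Re\mu=Re\mu_{0}\in\mathcal{Z}(A)$. For $(iii)$, $\delta(a)=a\mu_{0}-\mu_{0}a+a\eta=a\nu-\mu a$ with $\nu=\mu_{0}+\eta$, $\mu=\mu_{0}$ and $Re\mu\in\mathcal{Z}(A)$. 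Each converse is a direct substitution: plugging in the stated form, the two terms of the orthogonality product collapse and what survives is a multiple of the vanishing product ($ab$, resp. $ab^{\star}$, $a^{\star}b$) by the central element $\mu-\nu$, or else a multiple by $2\,Re\mu$, which is central; in both cases the expression is zero.

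I do not expect a serious obstacle, since the substantive analytic work is already packaged in Theorem \ref{o1}, Corollary \ref{c32}, and Sakai's inner-derivation theorems. The one point requiring care---and the only place the unital hypothesis is genuinely used---is the passage from $\A$-valued to $A$-valued data: one must confirm that $\eta=\delta(1)$ truly lands in $A$, not merely in $\A$, before Sakai's theorem can be invoked, and, in part $(i)$, that centrality in $\A$ descends to centrality in $A$ on the element $\eta$. After that, the recombination and the attached centrality and real-part conditions are routine bookkeeping.
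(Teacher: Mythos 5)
Your proposal is correct and follows essentially the same route as the paper: reduce to Theorem \ref{o1} and Corollary \ref{c32}, use $d(1)=0$ to see $\eta=\delta(1)\in A$ and hence $d(A)\subseteq A$, invoke Sakai's inner-derivation theorems together with Remark \ref{der} for the $\star$-cases, and recombine. The only difference is that you spell out the converse verifications, which the paper leaves as ``proved easily.''
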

\begin{proof}
$ (i) $ Suppose that $ \delta $ satisfies the given condition. By Theorem \ref{o1}-$(i)$, there exists a continuous derivation $ d : A \rightarrow \A $ and an element $ \eta \in \mathcal{Z} (\A) $ such that $ \delta (a) = d(a) + \eta   a  $ for all $ a \in A$. Since $ d $ is an derivation and $ A $ is unital, we have $ d(1) = 0  $, so $ \delta (1) = \eta \in A $ and hence $ d(A) \subseteq A $, and also $ \eta \in \mathcal{Z} (A) $. Since every derivation on $ A $ is inner, it follows that $ d(a) = a \mu - \mu a $ for all $ a \in A $. Setting $ \nu = \mu - \eta $. So $ \delta (a) = a \mu - \nu a $ for all $ a \in A $ and $ \mu - \nu \in \mathcal{Z} (A) $.
\par 
The converse is proved easily.
\par 
$ (ii) $ Let $ \delta $ satisfies $ (ii) $. By Theorem \ref{o1}-$(ii)$, there exists a continuous $ \star $-derivation $ d : A \rightarrow \A $ and an element $ \eta \in \A $ such that $ \delta (a) = d(a) + \eta   a  $ for all $ a \in A$. By using similar methods as in proof of part $(i)$, $ d: A \rightarrow A $ is a $ \star $-derivation, and hence it is inner. Now by Remark \ref{der}, there exists an element $ \mu \in A $ with $ Re \mu \in \mathcal{Z} (A) $ such that $ d(a) = a \mu - \mu a $ for all $ a \in A $, where $ \mu \in A $. Setting $ \nu = \mu - \eta $. So we have $ \delta (a) = a \mu - \nu a $ for all $ a \in A $ and $ Re \mu \in \mathcal{Z} (A) $.

The converse is proved easily.
\par 
$ (iii) $ By using Corollary \ref{c32} and a similar proof as that of part $(ii)$, we can obtain this result.
\end{proof}
Let $A$ be an algebra and $M$ be an $A$-bimodule. Recall that a linear map $d:A\rightarrow M$ is said to be a \textit{Jordan derivation} if $d(a^{2})=ad(a)+d(a)a$ for all $a\in A$. Clearly, each derivation is a Jordan derivation. The converse is not true, in general. Johnson in \cite{jon} has shown that any continuous Jordan derivation from a $C^{\star}$-algebra $A$ into any Banach $A$-bimodule is a derivation.
\par 
In the next theorem we characterize anti-derivations through one-sided orthogonality conditions. 
\begin{thm}\label{anti}
Let $A$ be a $ C^{\star} $-algebra, and let $\delta:A \rightarrow \A$ be a continuous linear map.
\begin{enumerate}
\item[(i)] Assume that 
\[ a b=0 \Longrightarrow b   \delta(a)+\delta(b)   a =0\quad (a , b  \in A).\]
Then there is a continuous derivation $ d: A \rightarrow \A $ and an element $ \eta \in \mathcal{Z} ( \A ) $ such that $ \delta (a) = d (a) + \eta   a $ for all $ a \in A $.
\item[(ii)] Assume that 
\begin{equation*}
 a b ^{\star}=0 \Longrightarrow \delta(b)^{\star}   a+b^{\star}    \delta(a)=0\quad (a , b \in A).
 \end{equation*}
Then there is a continuous derivation $ d: A \rightarrow \A $ and an element $ \eta \in \A $ such that $ \delta (a) = d (a) + a \eta $ for all $ a \in A $.
\end{enumerate}
\end{thm}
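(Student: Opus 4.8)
The plan is to adapt the method of Theorem~\ref{o1}: feed a suitable continuous bilinear map into Lemma~\ref{l21}, extract an identity of the form $\phi(ab,c)=\phi(a,bc)$, and then collapse it by taking weak$^{*}$ limits against a bounded approximate identity. The decisive new ingredient is that here, after subtracting off the expected module map, I will only be able to show that the remainder is a \emph{Jordan} derivation; invoking Johnson's theorem (a continuous Jordan derivation from a $C^{\star}$-algebra into a Banach bimodule is a derivation, \cite{jon}) then upgrades it to a genuine derivation. This is exactly why these anti-derivation-type hypotheses produce the same ``derivation plus module map'' conclusion as in Theorem~\ref{o1}.

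For (i), fix a bounded approximate identity $(u_{i})$ with $u_{i}\xlongrightarrow{\sigma(\A,A^{*})} e$ (Remark~\ref{13}) and put $\eta=\lim_{i}\delta(u_{i})$ in the weak$^{*}$ topology, which exists after passing to a subnet since $(\delta(u_{i}))$ is bounded. Consider the continuous bilinear map $\phi(a,b)=b\delta(a)+\delta(b)a$; the hypothesis is precisely $\phi(a,b)=0$ whenever $ab=0$, so Lemma~\ref{l21} gives $c\delta(ab)+\delta(c)ab=bc\delta(a)+\delta(bc)a$ for all $a,b,c$. Setting $a=u_{i}$ and letting $i\to\infty$ (using $u_{i}b\to b$ in norm and continuity of $\delta$ for the norm-convergent factors, and separate weak$^{*}$ continuity of the product for the factors containing $\delta(u_{i})$) yields $\delta(bc)=c\delta(b)+\delta(c)b-bc\eta$, while setting $c=u_{i}$ yields $\delta(ab)=b\delta(a)+\delta(b)a-\eta ab$. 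Comparing the two (after relabelling) forces $ab\eta=\eta ab$ for all $a,b$; since the products $ab$ span a dense subspace of $A$, Remark~\ref{12} gives $\eta\in\mathcal{Z}(\A)$. Finally, putting $b=a$ in the second identity and using centrality of $\eta$, a short computation shows that $d$ given by $d(a)=\delta(a)-\eta a$ satisfies $d(a^{2})=ad(a)+d(a)a$, so $d$ is a continuous Jordan derivation; Johnson's theorem makes $d$ a derivation and $\delta=d+\eta a$.

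For (ii), the trick is that $c\mapsto\delta(c^{\star})^{\star}$ is \emph{linear} (two conjugations cancel), so $\psi(a,c):=\delta(c^{\star})^{\star}a+c\delta(a)$ is a continuous bilinear map, and the hypothesis rewritten with $b=c^{\star}$ says $\psi(a,c)=0$ whenever $ac=0$; Lemma~\ref{l21} then gives $\delta(c^{\star})^{\star}ab+c\delta(ab)=\delta(c^{\star}b^{\star})^{\star}a+bc\delta(a)$. Now take a \emph{self-adjoint} approximate identity $(u_{i})$ (so $u_{i}^{\star}=u_{i}$) with $u_{i}\xlongrightarrow{\sigma(\A,A^{*})}e$ and $\eta=\lim_{i}\delta(u_{i})$ weak$^{*}$, and use the weak$^{*}$ continuity of the involution on $\A$ to pass $\delta(u_{i})^{\star}\to\eta^{\star}$. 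Setting $c=u_{i}$ gives $\delta(ab)=\delta(b^{\star})^{\star}a+b\delta(a)-\eta^{\star}ab$, and setting $a=u_{i}$ and then $c=u_{i}$ gives the auxiliary relation $\delta(b^{\star})^{\star}=\eta^{\star}b+\delta(b)-b\eta$. Substituting the latter into the former with $b=a$ cancels the $\eta^{\star}a^{2}$ terms and leaves $\delta(a^{2})=a\delta(a)+\delta(a)a-a\eta a$; hence $d$ given by $d(a)=\delta(a)-a\eta$ is a continuous Jordan derivation, and Johnson's theorem gives $\delta=d+a\eta$.

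The routine but error-prone part is the bookkeeping in the weak$^{*}$ limits: one must consistently distinguish the approximate-identity factors (which converge in norm via $u_{i}b\to b$ and continuity of $\delta$) from the $\delta(u_{i})$ factors (which converge only weak$^{*}$), and in (ii) one must justify $\delta(u_{i})^{\star}\to\eta^{\star}$ from weak$^{*}$ continuity of the involution together with $u_{i}^{\star}=u_{i}$. Conceptually the one genuinely nonobvious point, and the crux of the argument, is that the subtracted map is forced only to be a \emph{Jordan} derivation, so everything depends on having Johnson's theorem available to promote it to a derivation.
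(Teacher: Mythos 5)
Your proposal is correct and follows essentially the same route as the paper: the same bilinear maps fed into Lemma~\ref{l21}, the same approximate-identity limits producing $\eta$, the comparison of the two resulting identities to get centrality in (i), and the reduction to a continuous Jordan derivation promoted to a derivation via Johnson's theorem. The only cosmetic difference is in (ii), where you substitute $c=u_i$ directly and invoke the (true, standard) weak$^{*}$-continuity of the involution on $\A$ to pass $\delta(u_i)^{\star}\to\eta^{\star}$, whereas the paper first sets $a=u_i$, takes adjoints of the limiting identity, and then sets $c^{\star}=u_i$, thereby avoiding that appeal.
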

\begin{proof}
Suppose that $(u_i)_{i\in I}$ is a bounded approximate identity of $A$ such that $u_{i}\xlongrightarrow{\sigma (\A, A^*)} e $, where $ e$ is the identity of $\A$.
\par 
$(i)$ Define a continuous bilinear map $\phi:A \times A \rightarrow \A$ by $\phi(a , b )=b   \delta(a)+\delta(b)   a$. Then $\phi (a , b ) = 0$ for all $a , b \in A $ with $ a b =0$. By applying Lemma \ref{l21}, we obtain $ \phi (a b ,c)=\phi (a,bc)$ for all $ a , b , c  \in A $. So 
\begin{equation}\label{anti1}
c   \delta(ab)+\delta(c)   a b =  b c \delta(a)+\delta(bc)   a ,
\end{equation}
for all $a, b , c  \in A $. Since the net $(\delta(u_{i}))_{i\in I}$ is bounded, we can assume that it converge to some $\eta \in \A$ with respect to the weak$^{*}$ topology. On account of \eqref{anti1}, for all $ a , b \in A$ we have
\[ u_{i}   \delta( a b )+\delta(u_{i})   a b=  b u_{i}   \delta(a)+\delta(bu_{i})   a.\]
From continuity of $\delta$, we get $ b u_{i}   \delta(a)+\delta(bu_{i})   a$ converges to $ b   \delta(a)+\delta(b)   a $ with respect to the norm topology. On the other hand, by separately $w^{*}$-continuity of product in $\A$, it follows that $ u_{i}   \delta(ab)+\delta(u_{i})   a b $ converges to $\delta( a b )+\eta    a b $ with respect to the weak$^{*}$ topology. Hence
\begin{equation}\label{anti2}
\delta( a b )= b   \delta(a)+\delta(b)   a -\eta   a b 
\end{equation}
for all $a , b \in A$. Now letting $a=u_{i}$ in \eqref{anti1}, we obtain
\[ b c   \delta(u_{i})+\delta(b c )   u_{i}= c   \delta(u_{i}    b)+\delta(c)    u_{i} b .\]
By this identity and using similar arguments as above it
follows that
\begin{equation}\label{anti3}
\delta( a b )= b    \delta(a)+\delta(b)    a - a b    \eta
\end{equation} 
for all $ a , b \in A$. Hence from \eqref{anti2} and \eqref{anti3}, for each $a , b  \in A$, we find that $\mu   a b = a b    \eta $. So by the fact that $A^{2}=A$ and Remark \ref{12}, it follows that $\eta \in \mathcal{Z}(\A)$. Define $d:A \rightarrow \A$ by $d(a)=\delta(a)-\eta    a $. The linear map $d$ is continuous and by \eqref{anti2} and the fact that $\eta \in \mathcal{Z}(\A)$, it follows that $d$ is an Jordan derivation. From \cite{jon}, $ d $ is a derivation. 
\par 
$(ii)$ In order to prove this we consider the continuous bilinear map $\phi:A \times A \rightarrow \A$ defined by $\phi(a , b )=\delta(b^{\star})^{\star}    a + b    \delta(a)$. If $a , b \in A$ are such that $ a b =0$, then $\phi(a , b )=0$. So by Lemma \ref{l21}, we get $ \phi (ab , c )=\phi (a , b c )$ for all $a , b , c \in A$. Therefore
\begin{equation}\label{anti4}
\delta(c^{\star})^{\star}    a  b + c    \delta(a b )=\delta(c^{\star} b^{\star})^{\star}    a + b c    \delta(a),
 \end{equation}
 for all $a , b , c  \in A$. Setting $a=u_{i}$ in \eqref{anti4} and by using similar methods as in part $(i)$, we get
 \begin{equation}\label{anti5}
\delta(c^{\star})^{\star}    b + c    \delta(b)=\delta(c^{\star} b ^{\star})^{\star}+ b c    \eta,
 \end{equation}
 for all $ b , c  \in A $, where $\eta \in \A$ and $\delta(u_{i}) \xlongrightarrow{\sigma (\A, A^*)}\eta $. By \eqref{anti5} we have
 \begin{equation}\label{anti55}
  b^{\star}   \delta(c^{\star})+\delta(b)^{\star}    c^{\star}=\delta(c^{\star} b ^{\star})+\eta^{\star}    c^{\star} b^{\star},
  \end{equation}
 for all $ b , c  \in A$. Letting $c^{\star}=u_{i}$, we arrive at
 \[ b^{\star}    \eta+\delta(b)^{\star}=\delta(b^{\star})+\eta^{\star}    b^{\star},\]
 for all $b \in A$. Hence
 \begin{equation}\label{anti6}
\delta(b^{\star})-b^{\star}    \eta=(\delta(b)- b    \eta)^{\star},
 \end{equation} 
 for all $b  \in A $. From \eqref{anti55} we have
 \begin{equation}\label{anti555} 
 \delta(a b )= b    \delta(a)+\delta(b^{\star})^{\star}    a -\eta^{\star}    a b ,
 \end{equation}
 for all $ a , b  \in A$.
  Define $d: A \rightarrow \A$ by $d(a)=\delta(a)- a    \eta $. The linear map $d$ is continuous and from \eqref{anti6}, it follows that $d$ is a $\star$-map. Now from \eqref{anti555} we have
\begin{equation*}
\begin{split}
d(a^2) &=\delta(a^2)-a^2 \eta  \\
 &=a\delta(a)+\delta(a^{\star} )^{\star}a-\eta ^{\star} a^2-a^2 \eta\\
&= a(\delta(a)-a\eta)+(\delta (a^{\star} )^{\star}-(a^{\star}\eta)^{\star})a \\ 
&=  ad(a)+(d(a^{\star})^{\star})a  \\
&=  ad(a)+d(a)a
\end{split}
\end{equation*}   
for all $a\in A$. Thus $d$ is a continuous $\star$-derivation and by \cite{jon}, $ d $ is a $ \star $-derivation.
\end{proof}
\begin{cor} \label{cc}
Let $A$ be a $ C^{\star}  $-algebra, and let $\delta: A \rightarrow \A$ be a continuous linear map. Suppose that
\begin{equation*} 
 a^{\star} b=0 \Longrightarrow a^{\star}   \delta(b)+\delta(a)^{\star}   b=0\quad (a , b \in A).
 \end{equation*}
Then there is a continuous $\star$-derivation $ d: A \rightarrow \A $ and an element $ \eta \in \A $ such that $\delta(a)= d(a) + \eta    a  $ for all $ a \in A$. 
\end{cor}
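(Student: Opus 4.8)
The plan is to obtain this as a corollary of Theorem \ref{anti}-$(ii)$ through the adjoint substitution already exploited in Corollary \ref{c32}, thereby avoiding any repetition of the weak$^{\star}$-limit arguments. I would introduce the continuous linear map $\tau:A\rightarrow\A$ given by $\tau(a)=\delta(a^{\star})^{\star}$ and show that $\tau$ satisfies the hypothesis of Theorem \ref{anti}-$(ii)$, namely that $ab^{\star}=0$ implies $\tau(b)^{\star}a+b^{\star}\tau(a)=0$.

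The verification is the single computational point that needs care. From $\tau(b)^{\star}=\delta(b^{\star})$ and $\tau(a)=\delta(a^{\star})^{\star}$, the expression $\tau(b)^{\star}a+b^{\star}\tau(a)$ equals $\delta(b^{\star})a+b^{\star}\delta(a^{\star})^{\star}$. Applying the orthogonality hypothesis on $\delta$ to the pair $(a^{\star},b^{\star})$ — whose trigger is exactly the assumption $ab^{\star}=0$ — yields precisely that $\delta(b^{\star})a+b^{\star}\delta(a^{\star})^{\star}=0$. Hence $\tau$ meets the hypothesis of Theorem \ref{anti}-$(ii)$.

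I would then apply Theorem \ref{anti}-$(ii)$ to $\tau$, producing a continuous $\star$-derivation $d:A\rightarrow\A$ and an element $\eta_{1}\in\A$ with $\tau(a)=d(a)+a\eta_{1}$ for all $a\in A$. Unwinding the definition of $\tau$ and passing to adjoints gives $\delta(a^{\star})=d(a)^{\star}+\eta_{1}^{\star}a^{\star}$; since $d$ is a $\star$-map, $d(a)^{\star}=d(a^{\star})$, so substituting $a^{\star}$ for $a$ yields $\delta(a)=d(a)+\eta a$ with $\eta=\eta_{1}^{\star}\in\A$, which is the claimed form. The converse direction is a routine verification.

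The only genuine obstacle I anticipate is keeping the placement of the involution straight in the verification step, so that the trigger of the hypothesis on $\delta$ aligns with $ab^{\star}=0$ and the surviving terms assemble exactly into $\tau(b)^{\star}a+b^{\star}\tau(a)$; once the substitution $\tau(a)=\delta(a^{\star})^{\star}$ is fixed, the rest is a formal passage through the adjoint, and no new weak$^{\star}$-continuity input is needed because it has already been absorbed into Theorem \ref{anti}.
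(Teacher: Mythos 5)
Your strategy --- reducing to Theorem \ref{anti}-$(ii)$ via the substitution $\tau(a)=\delta(a^{\star})^{\star}$ and transporting the conclusion back through the involution --- is exactly the paper's proof, and your final unwinding (using that $d$ is a $\star$-map to pass from $\tau(a)=d(a)+a\eta_{1}$ to $\delta(a)=d(a)+\eta_{1}^{\star}a$) is correct.

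The problem is the verification step, which is precisely the point you flagged as delicate. Against the hypothesis as literally printed, namely $x^{\star}y=0\Rightarrow x^{\star}\delta(y)+\delta(x)^{\star}y=0$, substituting the pair $(a^{\star},b^{\star})$ gives $a\,\delta(b^{\star})+\delta(a^{\star})^{\star}b^{\star}=0$: the factors $a$ and $b^{\star}$ land on the opposite sides from what you need, which is $\delta(b^{\star})a+b^{\star}\delta(a^{\star})^{\star}=0$, and in a noncommutative algebra these are genuinely different identities (no other choice of the pair, nor taking adjoints, produces the required one). What saves your argument is that the printed hypothesis is a typo: it duplicates Corollary \ref{c32} verbatim, whose conclusion is $\delta(a)=d(a)+a\eta$ rather than $d(a)+\eta a$; indeed $\delta(a)=a\eta$ with $Re\eta\notin\mathcal{Z}(\A)$ satisfies the printed condition yet admits no decomposition $d(a)+\eta' a$ with $d$ a $\star$-derivation, so the corollary is false under the literal reading. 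The intended hypothesis, consistent with item (ii) of the introduction and with the proof of part $(iii)$ of the proposition that follows, is the anti-derivation form $a^{\star}b=0\Rightarrow\delta(b)a^{\star}+b\,\delta(a)^{\star}=0$; with that, your substitution of $(a^{\star},b^{\star})$ yields exactly $\delta(b^{\star})a+b^{\star}\delta(a^{\star})^{\star}=0$, the verification closes, and your proof coincides with the paper's. One further small point: the statement of Theorem \ref{anti}-$(ii)$ only asserts that $d$ is a derivation; the $\star$-property you rely on in the unwinding is established in its proof, so you should cite that explicitly.
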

\begin{proof}
Consider the continuous linear map $ \tau: A \rightarrow \A $ defined by $ \tau (a) = \delta(a^{\star})^{\star} $. 
It is easily seen that the map satisfies in conditions of Theorem \ref{anti}-$(ii)$. 
So there exists a continuous $\star$-derivation $ d : A \rightarrow \A $ and an element $ \eta_1 \in \A $ such that
 $ \tau (a) = d(a) + a \eta_1    $ for all $ a \in A$. 
Then $ \delta(a)= d(a) + \eta a  $ for all $ a \in A$, where $ \eta = \eta_1^{\star} $.
\end{proof}
Note that in part (ii) of Theorem \ref{anti} and above corollary, it is not necessary true that $ \eta \in \mathcal{Z} ( \A) $. For example suppose that $ \eta $ is not in $ \mathcal{Z} ( \A) $ and $ \eta^\star [ a , b ] + [a , b ] \eta = 0 $. Define $ \delta : A \rightarrow \A $ by $ \delta (a) = a   \eta  $. Then $ \delta $ satisfies in condition of Theorem \ref{anti}-$(ii)$, but $ \delta $ equals to sum of the zero derivation and $ a \eta  $, while $ \eta $ is not belong to $ \mathcal{Z} ( \A) $.
\begin{prop}
Let $ A $ be a von Neumann algebra or a simple $ C^{\star} $-algebra with unity. Suppose that $ \delta : A \rightarrow A $ is a continuous linear map. Then
\begin{enumerate}
\item[(i)] $ \delta $ satisfies 
\[ a b=0 \Longrightarrow b \delta(a)+\delta(b) a =0\quad (a , b  \in A).\]
if and only if there are elements $\mu,\nu \in A$ such that $\delta(a)= a \mu-\nu  a $, where $ \mu  - \nu \in \mathcal{Z}(A)$ and 
\[ [[a , b ],\mu]+2 [a , b ]  (\mu-\nu)=0, \]
for all $ a , b \in A$.
\item[(ii)] $ \delta $ satisfies 
\begin{equation*}
 a b ^{\star}=0 \Longrightarrow \delta(b)^{\star} a+b^{\star}  \delta(a)=0\quad (a , b \in A).
 \end{equation*}
if and only if there are elements $\mu, \nu\in A$ such that $\delta(a)=a  \nu-\mu a $ for all $a \in A$ and $Re\mu \in \mathcal{Z}(A)$ and 
\[ [[a, b ],\mu]+(\nu-\mu)^{\star}  [a , b ]+[a , b ]  (\nu-\mu)=0,\]
for all $a , b  \in A$ .
\item[(iii)] $ \delta $ satisfies  
\begin{equation*}
 a^{\star} b =0 \Longrightarrow \delta(b) a^{\star}+b  \delta(a)^{\star}=0\quad (a , b  \in A).
 \end{equation*}
if and only if there are elements $\mu, \nu\in A$ such that $\delta(a)=a  \mu-\nu a $ for all $a \in A$ and $Re\mu \in \mathcal{Z}(A)$ and 
\[ [[a , b ],\mu]+[a , b ]  (\mu-\nu)^{\star}+(\mu-\nu)  [ a, b ]=0,\]
for all $ a , b \in A $ .
\end{enumerate}
\end{prop}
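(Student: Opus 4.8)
The plan is to follow the pattern of the preceding proposition: use Theorem~\ref{anti} and Corollary~\ref{cc} to pin down the global shape of $\delta$, invoke Sakai's theorems to make the derivation part inner, and then read off the commutator identities as the exact ``anti-derivation defect''. For part $(i)$ I would apply Theorem~\ref{anti}-$(i)$ to get a continuous derivation $d:A\to\A$ and $\eta\in\mathcal{Z}(\A)$ with $\delta(a)=d(a)+\eta a$. Unitality forces $d(1)=0$, so $\delta(1)=\eta\in A$; since $\eta$ commutes with all of $\A\supseteq A$, in fact $\eta\in\mathcal{Z}(A)$, and then $d(a)=\delta(a)-\eta a\in A$, so $d:A\to A$ is a derivation, inner by \cite[Theorem~4.1.6]{sak} and \cite[Theorem~4.1.11]{sak}, say $d(a)=a\mu-\mu a$. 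Putting $\nu=\mu-\eta$ gives $\delta(a)=a\mu-\nu a$ with $\mu-\nu=\eta\in\mathcal{Z}(A)$. Part $(ii)$ is identical but built on Theorem~\ref{anti}-$(ii)$: the inner $\star$-derivation $d(a)=a\mu-\mu a$ has $Re\mu\in\mathcal{Z}(A)$ by Remark~\ref{der}, and absorbing the $a\eta$ term gives $\delta(a)=a\nu-\mu a$ with $\nu=\mu+\eta$.

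To produce the commutator identities I would not reuse the orthogonality hypothesis but instead the relations \eqref{anti2} and \eqref{anti555}, which were established for all $a,b\in A$ inside the proof of Theorem~\ref{anti}. For $(i)$, substituting $\delta(a)=a\mu-\nu a$ and $\eta=\mu-\nu$ into $\delta(ab)=b\delta(a)+\delta(b)a-\eta ab$ and using that $\eta$ is central collapses all the mixed terms and leaves $[[a,b],\mu]+2\eta[a,b]=0$, i.e.\ $[[a,b],\mu]+2[a,b](\mu-\nu)=0$ for every $a,b$. For $(ii)$, the same substitution into \eqref{anti555}, combined with $\mu^\star=2Re\mu-\mu$ to cancel the $b\mu a$-type terms against $b\mu^\star a$, reduces to $[a,b]\nu+\nu^\star[a,b]-2Re\mu[a,b]=0$, which is exactly the stated identity $[[a,b],\mu]+(\nu-\mu)^\star[a,b]+[a,b](\nu-\mu)=0$. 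I expect this to be the main obstacle: it is the one place where the precise form of the identity is forced, and it demands careful tracking of the involution together with repeated use of the centrality of $\eta$, respectively $Re\mu$, to annihilate the symmetric part.

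For part $(iii)$ I would avoid a separate computation by passing to $\tau(a)=\delta(a^\star)^\star$, which satisfies the hypothesis of Theorem~\ref{anti}-$(ii)$ exactly as in the proof of Corollary~\ref{cc}. Part $(ii)$ then yields $\tau(a)=a\nu'-\mu'a$ with $Re\mu'\in\mathcal{Z}(A)$ and the corresponding identity; unwinding $\delta(a)=\tau(a^\star)^\star$ returns $\delta(a)=a\mu-\nu a$ with $\mu=-\mu'^{\star}$, $\nu=-\nu'^{\star}$ and $Re\mu=-Re\mu'\in\mathcal{Z}(A)$. The transported identity carries a term $[[a,b],\mu^\star]$; since $Re\mu$ is central one has $[[a,b],\mu^\star]=-[[a,b],\mu]$, and the identity becomes $[[a,b],\mu]+[a,b](\mu-\nu)^\star+(\mu-\nu)[a,b]=0$, as required.

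Each converse is then a direct verification. Writing $\eta=\mu-\nu$ and computing, for arbitrary $a,b$, the expressions $b\delta(a)+\delta(b)a$, $\delta(b)^\star a+b^\star\delta(a)$, and $\delta(b)a^\star+b\delta(a)^\star$ shows they are governed by the products $ba$, $b^\star a$, and $ba^\star$ respectively. Under the three orthogonality conditions these products equal $-[a,b]$, $-[a,b^\star]$, and $[b,a^\star]$, so substituting and applying the relevant commutator identity makes each expression vanish, which completes all three equivalences.
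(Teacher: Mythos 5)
Your proposal is correct and follows essentially the same route as the paper: Theorem \ref{anti} together with Sakai's innerness theorems gives the form $\delta(a)=a\mu-\nu a$, the identities \eqref{anti2}/\eqref{anti3} and \eqref{anti555} (valid for all $a,b$ by the proof of Theorem \ref{anti}) yield the commutator conditions, part $(iii)$ is transported via $\tau(a)=\delta(a^{\star})^{\star}$, and the converses are the same direct verifications. The only cosmetic difference is that you substitute the explicit inner form of $\delta$ directly into \eqref{anti2} and \eqref{anti555}, whereas the paper first compares $\delta(ab)$ with $\delta(ba)$ using the derivation property of $d$; the two computations are equivalent.
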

\begin{proof}
$ (i) $
Let $ \delta $ satisfies $(i)$. By Theorem \ref{anti}-$(i)$, there is a continuous derivation $ d:A \rightarrow \A $ and an
element $ \eta \in \mathcal{Z} (\A) $ such that $ \delta (a) = d(a) + \eta   a $ for all $ a \in A $. Since $ d (1) =0 $, we have $ \eta \in \mathcal{Z} (A) $, and $ d $ is a derivation on $ A $. By the fact that every derivation
on $ A $ is inner, it follows that $ d (a) = a \mu - \mu a $ for all $ a \in A $, where $ \mu \in A $. Setting $ \nu = \mu - \eta $. So $ \delta (a) = a \mu - \nu a $ for all $ a \in A $ and $ \mu - \nu \in \mathcal{Z} (A) $.\\
Now by \eqref{anti3} and the fact that $d$ is a derivation we see that 
\begin{equation*}
\begin{split}
\delta(ab)+ a b \eta &= b  \delta(a)+\delta(b) a \\
 &=b   d(a)+ b  \eta  a+d(b)   a +\eta   b a \\
&= d( b a )+2 b a \eta \\ 
&= \delta(b a )+ b a \eta,
\end{split}
\end{equation*} 
for all $ a , b \in A $. So
\[  a b  \mu-\nu   a b + a b   \eta = b a  \mu-\nu   b a + b a   \eta,\]
and hence
\[  a b   \mu- \mu    a b +2 a b \eta= b a   \mu-\mu   b a +2 b a \eta,\] 
for all $ a , b \in A$. Therefore
\[ [[a , b ],\mu]+2 [a , b ]   (\mu-\nu)=0, \]
for all $a , b \in A$.
\par 
Conversely, suppose that there are elements $\mu,\nu \in A$ such that $\delta(a)= a \mu-\nu  a $, where $ \mu  - \nu \in \mathcal{Z}(A)$ and 
\[ [[a , b ],\mu]+2 [a , b ]  (\mu-\nu)=0, \]
for all $ a , b \in A$. From this equation for all $a , b \in A$ with $ab=0$, we have 
\[ba\mu -\mu ba +2ba(\mu - \nu )=0.\]
Since $ \mu  - \nu \in \mathcal{Z}(A)$, it follows that
\[0=ba\mu -\mu ba +2(\mu - \nu )ba=ba\mu +\mu ba -2\nu ba.\]
Therefore 
\begin{equation*}
\begin{split}
b\delta(a)+\delta(b)a &= ba\mu -\nu ba+b(\mu-\nu)a \\
 &= ba\mu -\nu ba+(\mu-\nu)ba\\
&= ba\mu +\mu ba -2\nu ba \\ 
&= 0,
\end{split}
\end{equation*} 
for all $a , b \in A$ with $ab=0$.
\par 
$ (ii) $ By Theorem \ref{anti}-(ii), there is a continuous $ \star $-derivation $ d:A \rightarrow \A $ and an
element $ \eta \in \A $ such that $ \delta (a) = d(a) + a\eta    $ for all $ a \in A $, and by using similar arguments as above, it follows that $ \eta \in A $ and $ d $ is a derivation on $ A $. The derivation $d$ is an inner derivation and by Remark \ref{der}, there is an element $\mu\in A$ with $Re\mu\in \mathcal{Z}(A)$, such that $d(a)= a \mu-\mu a $ for all $ a \in A $. \\
 Now by \eqref{anti555} and the fact that $d$ is a $\star$-derivation, we have 
  \[ \delta( a b )+\eta^{\star}   a b =\delta( b a )+\eta^{\star} b a ,\]
for all $ a , b  \in A$. So from the fact that $\delta (a) = d(a) + a\eta    =a \mu-\mu a +a\eta     $ for all $a\in A$, we have
\[  a b  \mu-\mu  a b +\eta^{\star} a b = b a  \mu-\mu b a +\eta^{\star} b a ,\]
and hence
\[ [[ a , b ],\mu]+\eta^{\star} [a , b ]+[ a , b ]  \eta =0,\]
for all $ a , b  \in A$. By setting $\nu=\mu+\eta $, we have $\delta(a)= a \nu-\mu   a $ and 
\[ [[a , b ],\mu]+(\nu-\mu)^{\star} [ a , b ]+[ a , b ]  (\nu-\mu)=0,\]
for all $a , b  \in A$, where $Re\mu\in \mathcal{Z}(A)$.
\par 
Conversely, suppose that there are elements $\mu, \nu\in A$ such that $\delta(a)=a  \nu-\mu a $ for all $a \in A$ and $Re\mu \in \mathcal{Z}(A)$ and 
\[ [[a, b ],\mu]+(\nu-\mu)^{\star}  [a , b ]+[a , b ]  (\nu-\mu)=0,\]
for all $a , b  \in A$ . By this equation for all $a , b \in A$ with $ab^\star=0$, we have 
\[- \mu b^\star a+ \nu ^{\star} b^\star a -\mu^{\star} b^{\star}a +b^{\star} a\nu=0.\]
So by $Re\mu \in \mathcal{Z}(A)$ we get
\begin{equation*}
\begin{split}
b^{\star}\delta(a)+\delta(b)^{\star}a &= b^{\star} a\nu + \nu^{\star} b^{\star}a-b^{\star}(\mu+\mu^{\star})a \\
 &= b^{\star} a\nu + \nu^{\star} b^{\star}a-\mu b^{\star}a-\mu^{\star}b^{\star}a \\
&= 0,
\end{split}
\end{equation*} 
for all $a , b \in A$ with $ab^{\star}=0$.
\par 
$ (iii) $ Define the map $d: A \rightarrow A$ by $d(a)=\delta(a^{\star})^{\star}$. It is easily seen that the map $d$ satisfies in conditions of part $(ii)$. So, there exists $\mu_{1}, \nu_{1}\in A$ such that $d(a)=a \nu_{1}-\mu_{1} a$ for all $a \in A$ with $Re\mu_{1}\in \mathcal{Z}(A)$ and 
\[ [[a , b ],\mu_{1}]+(\nu_{1}-\mu_{1})^{\star} [a , b ]+[a , b ] (\nu_{1}-\mu_{1})=0,\]
for all $a ,b  \in A $. Then $\delta(a)=a   \mu-\nu   a $ for all $a \in A $, where $\nu=-\nu_{1}^{\star}$, $\mu=-\mu_{1}^{\star}$ with $Re\mu\in \mathcal{Z}(A)$ and
\[ [[a , b ],\mu]+[a , b ] (\mu-\nu)^{\star}+(\mu-\nu)[a , b ]=0,\]
for all $a , b  \in A$.
\par 
The converse is proved by a similar method as in part $(ii)$.
\end{proof}
\section{Derivations through two-sided orthogonality conditions}
In this section we will consider a linear map behaving like derivation at two-sided orthogonality conditions.
In order to prove our results we need the following result.
\begin{lem} \label{l32}
Let $A$ be a unital $ C^{\star}  $-algebra, let $ X $ be a Banach space, and let $ \phi: A \times A \rightarrow X $ be a continuous bilinear map satisfying
\[ ab = ba = 0 \Longrightarrow \phi ( a, b ) =0 ~~~~~~~~ (a, b \in A), \]
Then for all $ a , c \in \mathcal{Z} (A) $ and $ b \in A $ we have
\[ \phi (ab , c ) = \phi (a , b c ). \]
\end{lem}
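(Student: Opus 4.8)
The plan is to exploit that the centre $\mathcal{Z}(A)$ is a commutative, unital $C^{\star}$-subalgebra of $A$ on which the two hypotheses coincide: if $s,t\in\mathcal{Z}(A)$ satisfy $st=0$, then automatically $st=ts=0$, so a pair of central elements with zero product is two-sided orthogonal. This lets me bring Lemma \ref{l21} to bear on bilinear maps built from $\phi$ by inserting a central factor next to $b$. Concretely, I would first record two ``module-type'' identities that move a central factor from one side of $b$ to the other, and then establish separately a symmetry of $\phi$ at the unit. Combining these, both $\phi(ab,c)$ and $\phi(a,bc)$ will be shown to equal a common value $\phi(1,acb)$, where $1$ denotes the unit of $A$.

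For the module identities, fix $b\in A$ and consider the continuous bilinear maps $\psi,\eta:\mathcal{Z}(A)\times\mathcal{Z}(A)\to X$ given by $\psi(s,t)=\phi(sb,t)$ and $\eta(s,t)=\phi(t,bs)$. If $st=0$ with $s,t$ central, then centrality gives $(sb)t=(st)b=0$ and $t(sb)=(ts)b=0$, so $sb$ and $t$ are two-sided orthogonal and $\psi(s,t)=0$; a symmetric computation gives $\eta(s,t)=0$. Hence $\psi$ and $\eta$ satisfy the hypothesis of Lemma \ref{l21} on the $C^{\star}$-algebra $\mathcal{Z}(A)$, which yields $\psi(s_1s_2,t)=\psi(s_1,s_2t)$ and $\eta(s_1s_2,t)=\eta(s_1,s_2t)$ for all central $s_1,s_2,t$. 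Specialising (taking one argument to be the unit) I expect to obtain
\[\phi(ab,c)=\phi(b,ac)\qquad\text{and}\qquad \phi(a,bc)=\phi(ac,b)\]
for all $a,c\in\mathcal{Z}(A)$ and $b\in A$.

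The key remaining point---and the step I expect to be the main obstacle---is the symmetry $\phi(w,1)=\phi(1,w)$ for every $w\in A$. The naive idea of writing $w$ as a combination of mutually orthogonal projections and using $\phi(p_i,p_j)=0$ for $i\neq j$ fails, because elements with finite spectrum need not be norm-dense in a general $C^{\star}$-algebra. Instead I would reduce to self-adjoint $w$ by linearity (writing $w=h+ik$ with $h,k$ self-adjoint) and then, for self-adjoint $w$, pass to the commutative unital $C^{\star}$-subalgebra $B=C^{\star}(w,1)$. On $B$ a zero product is automatically a two-sided orthogonal pair in $A$, so the restriction $\phi|_{B\times B}$ satisfies the hypothesis of Lemma \ref{l21}; by the commutative case noted after that lemma, $\phi|_{B\times B}$ is symmetric. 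Since $w,1\in B$, this gives $\phi(w,1)=\phi(1,w)$, and linearity extends it to all $w$.

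Finally I would assemble the pieces. Using the first module identity twice together with the symmetry at the unit, $\phi(ab,c)=\phi(b,ac)=\phi(acb,1)=\phi(1,acb)$; using the second module identity twice, $\phi(a,bc)=\phi(ac,b)=\phi(1,b\,ac)=\phi(1,acb)$, where in the last step centrality of $a,c$ gives $b\,ac=acb$. Comparing the two expressions yields $\phi(ab,c)=\phi(a,bc)$, as required.
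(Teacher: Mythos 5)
Your proof is correct, but it takes a genuinely different route from the paper's. The paper handles everything in one stroke: for self-adjoint $b$ it defines $\psi(f,g)=\phi(af(b),g(b)c)$ on $C(I)\times C(I)$ (with $I\supseteq\sigma(b)$), checks that $fg=0$ forces both products $(af(b))(g(b)c)$ and $(g(b)c)(af(b))$ to vanish (using centrality of $a,c$ and the fact that $f(b)$, $g(b)$ commute), invokes the symmetry of $\psi$ from the commutative case of Lemma \ref{l21}, and specializes $f=1$, $g(s)=s$ to get $\phi(a,bc)=\phi(ab,c)$ directly; general $b$ then follows by decomposing into self-adjoint parts. You instead split the work into three applications of Lemma \ref{l21}: two on $\mathcal{Z}(A)\times\mathcal{Z}(A)$ (with $b$ as a frozen parameter) to obtain the module identities $\phi(ab,c)=\phi(b,ac)$ and $\phi(a,bc)=\phi(ac,b)$, and one on the commutative subalgebra $C^{\star}(w,1)$ to obtain the symmetry $\phi(w,1)=\phi(1,w)$, after which everything is funneled through $\phi(1,acb)$. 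I checked the individual steps — the orthogonality verifications $(sb)t=t(sb)=0$ and $(bs)t=t(bs)=0$, the specializations of the Lemma \ref{l21} identities, and the final assembly (including $bac=acb$) — and they all go through; your correct observation that elements of finite spectrum need not be dense, so that the unit-symmetry must be routed through $C^{\star}(w,1)$ rather than through projections, is exactly the right fix. The trade-off: the paper's single functional-calculus construction is shorter and does not mention $\mathcal{Z}(A)$ as a $C^{\star}$-algebra at all, while your decomposition isolates more transparently which part of the conclusion comes from centrality of $a,c$ and which part comes from the unit; both arguments rest on the same engine, namely the symmetry of zero-product-preserving bilinear maps on commutative $C^{\star}$-algebras.
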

\begin{proof}
Let $ a, c \in \mathcal{Z} (A)  $, and $b$ be a self-adjoint element i $A$. Let $I$ be a compact interval of $ \mathbb{R} $ containing the spectrum of $ b $. Define $ \psi : C(I) \times C(I) \rightarrow X $ by
\[ \psi(f, g) = \phi (af(b) , g (b) c ). \]
If $f, g \in  C(I) $ are such that $fg = 0$, then $ a f(b) g(b) c = g(b) c a f (b) = g(b) f (b)ca=0 $ and so $ \psi(f, g) = 0$. On account of Lemma \ref{l21}, $\psi$ is symmetric, i.e. $ \psi (f, g) = \psi(g, f) $ for all $f, g \in C(I)$, hence
\[ \phi ( a f (b) , g(b) c ) = \phi ( a g (b) , f(b) c ) . \]
Now, for $f(s) = 1$ and $g(s) = s$ we obtain $ \phi ( a , bc ) = \phi ( a b, c ) $, which readily implies the desired conclusion (since $b$ is self-adjoint).
\end{proof}
\par
In continue we give the main results of this section.
\par 
\begin{prop}\label{p33}
Let $A$ be a $C^\star$-algebra, and let $\delta: A \rightarrow \A $ be a continuous linear map. Assume that 
\[  a b = b a =0 \Longrightarrow a   \delta(b)+\delta(a)    b =b    \delta(a)+\delta(b)    a =0\quad (a , b  \in A).\]
Then there is a continuous derivation $ d: A \rightarrow \A $ and an element $ \eta \in \mathcal{Z}(\A) $ such that $\delta(a)= d(a) + \eta    a  $ for all $ a \in A$.
\end{prop}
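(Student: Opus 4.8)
The plan is to manufacture a central element $\eta\in\mathcal{Z}(\A)$ for which $d:=\delta-\eta(\,\cdot\,)$ is a genuine derivation; then $\delta=d+\eta(\,\cdot\,)$ has exactly the asserted form, so the whole problem reduces to analysing $d$. First I would record that the hypothesis is really a single condition on the continuous bilinear map $\phi:A\times A\to\A$, $\phi(a,b)=a\delta(b)+\delta(a)b$. Since the region $\{ab=ba=0\}$ is symmetric in $a$ and $b$, the two stated vanishings $a\delta(b)+\delta(a)b=0$ and $b\delta(a)+\delta(b)a=0$ together amount to the single requirement that $\phi(a,b)=0$ whenever $ab=ba=0$ (the second is just $\phi$ evaluated at $(b,a)$, which lies in the same region). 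In particular $\phi$ satisfies the hypothesis of Lemma \ref{l32}.

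Next I would locate $\eta$. Using Remark \ref{13}, choose a bounded approximate identity $(u_i)_{i\in I}$ with $u_i\xlongrightarrow{\sigma(\A,A^*)}e$; since $(\delta(u_i))$ is bounded, after passing to a subnet $\delta(u_i)\xlongrightarrow{\sigma(\A,A^*)}\eta$ for some $\eta\in\A$. To see $\eta\in\mathcal{Z}(\A)$ I would invoke Lemma \ref{l32}: when $A$ is unital, taking the central elements $a=c=1$ gives $\phi(b,1)=\phi(1,b)$, i.e. $b\delta(1)=\delta(1)b$ for every $b$, so $\eta=\delta(1)\in\mathcal{Z}(\A)$ by Remark \ref{12}; in general the same conclusion is reached by feeding the approximate identity into the Lemma \ref{l32} identity and taking weak$^*$ limits, in the spirit of the proof of Theorem \ref{anti}. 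Setting $d=\delta-\eta(\,\cdot\,)$ then produces a continuous linear map which, because $\eta$ is central and kills products of orthogonal elements, again satisfies $ad(b)+d(a)b=0$ whenever $ab=ba=0$, and which moreover obeys $d(u_i)\xlongrightarrow{\sigma(\A,A^*)}0$.

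The core step is to prove that $d$ is a \emph{Jordan} derivation, after which Johnson's theorem \cite{jon} upgrades it to a derivation and closes the argument. Here I would exploit the fact that two-sided orthogonality becomes transparent inside a commutative $C^\star$-subalgebra. Fix a self-adjoint $b\in A$ and, for continuous functions $f,g$ on the spectrum of $b$, consider $\Psi(f,g)=f(b)d(g(b))+d(f(b))g(b)$. If $fg=0$ then $f(b)g(b)=g(b)f(b)=0$, so $\Psi(f,g)=0$; by the commutative case of Lemma \ref{l21}, $\Psi$ is symmetric and $\Psi(f,g)=\Phi_0(fg)$ for a continuous linear $\Phi_0$. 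Evaluating $\Phi_0(s^2)$ in two ways---once as $\Psi(s,s)=bd(b)+d(b)b$, and once, using the unit, as $\Psi(s^2,1)=d(b^2)$ (recall $d(1)=0$)---yields $d(b^2)=bd(b)+d(b)b$ for self-adjoint $b$. A routine polarization over the self-adjoint part then promotes this to the Jordan identity for all elements, and \cite{jon} finishes.

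The main obstacle is the passage through the non-unital case. The clean evaluation $\Phi_0(s^2)=\Psi(s^2,1)=d(b^2)$ relies on the constant function $1$, that is, on the unit of $A$; when $A$ is non-unital every admissible $f,g$ must vanish at $0$, and one can no longer factor $s^2$ with one factor equal to the unit, so $\Phi_0(s^2)$ is a priori only $bd(b)+d(b)b$ and need not equal $d(b^2)$. Bridging this gap---replacing $1$ by an approximate unit of $C^\star(b)$ and controlling the resulting weak$^*$ limits against the range projection of $b$ in $\A$, or reducing to the unital case---together with the non-unital verification that $\eta$ is central, is the delicate technical heart of the proof; the unital computation above is precisely what one must make survive these limiting arguments.
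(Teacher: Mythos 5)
Your route is genuinely different from the paper's. The paper disposes of Proposition \ref{p33} in one stroke: adding the two hypothesised identities gives $a\delta(b)+\delta(a)b+b\delta(a)+\delta(b)a=0$ whenever $ab=ba=0$, which is exactly the hypothesis of Theorem 4.1 of \cite{al10}; that theorem yields $\delta=d+\Phi$ with $d$ a continuous derivation and $\Phi$ a bimodule homomorphism, and the paper then identifies $\Phi$ as multiplication by a central $\eta\in\A$ using an approximate identity, automatic continuity of $\Phi$, and Remark \ref{12}. You instead attempt a self-contained argument: extract $\eta$ as a weak$^*$ limit of $\delta(u_i)$, show it is central, and prove that $d=\delta-\eta(\,\cdot\,)$ is a Jordan derivation by functional calculus inside $C^*(b)$ for self-adjoint $b$, finishing with Johnson's theorem. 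In the unital case this does work and is a legitimate alternative (close in spirit to how \cite{al10} itself is proved): with $\eta=\delta(1)$, Lemma \ref{l32} at $a=c=1$ gives $b\delta(1)=\delta(1)b$ and hence $\eta\in\mathcal{Z}(\A)$; your $\Psi(f,g)=f(b)d(g(b))+d(f(b))g(b)$ vanishes when $fg=0$; Lemma \ref{l21} gives $\Psi(f,g)=\Phi_0(fg)$; and comparing $\Phi_0(s\cdot s)$ with $\Phi_0(s^2\cdot 1)$ (using $d(1)=0$) yields $d(b^2)=bd(b)+d(b)b$, after which polarization and \cite{jon} finish.

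The gap you flag in the non-unital case is real, and your proposed bridge does not work as stated. Lemma \ref{l32} only yields $\phi(ab,c)=\phi(a,bc)$ for $a,c\in\mathcal{Z}(A)$, and a bounded approximate identity is in general not central, so ``feeding the approximate identity into the Lemma \ref{l32} identity'' is not licensed by that lemma. Worse, for non-unital $A$ the centre can be trivial (e.g.\ $A=\mathcal{K}(H)$ with $H$ infinite dimensional), in which case Lemma \ref{l32} carries no information at all; and your functional-calculus step simultaneously loses the constant function $1$, exactly as you note. Passing to the unitization is not automatic either: one would have to extend $\delta$ and verify the two-sided orthogonality hypothesis on $A\oplus\mathbb{C}1$, and orthogonal pairs there do not reduce to orthogonal pairs in $A$. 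So as written your argument establishes the proposition only for unital $A$; to obtain the general statement you must either carry out the missing limiting arguments in detail (which is essentially the content of the proof of Theorem 4.1 in \cite{al10}) or simply invoke that theorem, as the paper does.
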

\begin{proof}
For all $ a , b \in A $ with $ ab=ba = 0  $, we have 
\[a   \delta(b)+\delta(a)    b + b    \delta(a)+\delta(b)    a =0 . \]
So by \cite [Theorem 4.1]{al10}, there exist a continuous derivation $ d : A \rightarrow \A $ and a bimodule homomorphism $ \Phi : A \rightarrow \A $ such that $ \delta = d + \Phi $.\par 
Suppose that $(u_i)_{i\in I}$ is a bounded approximate identity of $A$, By \cite{jon0}, $ \Phi $ is continuous and so the net $\{ \Phi(u_{i}) \}_{i\in I}$ in $ \A $ is bounded. Hence we can assume that $\Phi(u_{i}) \xlongrightarrow{\sigma (\A, A^*)} \eta $ for some $\eta \in \A$. Now, for all $ a \in A $ we have $ u_i a \rightarrow a $ and $ a u_i \rightarrow a  $. Thus $ \Phi (u_i a) \rightarrow \Phi (a) $ and $ \Phi (a u_i ) \rightarrow \Phi (a) $. On the other hand by separately $w^{*}$-continuity of product in $\A$, we see that 
\[ \Phi (u_i) a \xlongrightarrow{\sigma (\A, A^*)} \eta a \quad \text{and} \quad a \Phi (u_i) \xlongrightarrow{\sigma (\A, A^*)} a \eta ,\]
for all $a\in A$. Since $\Phi$ is a bimodule homomorphism, it follows that 
\[ \Phi (a) = \eta    a = a   \eta,  ~~~~ (a \in A, ) ,\] 
and so
\[ \delta (a) = d (a) + \eta   a , \] for all $a\in A$. Also from Remark \ref{12}, $ \eta \in \mathcal{Z} (\A ) $. It is clear that $d$ is continuous.
\end{proof}

\begin{thm} \label{t43}
Let $A$ be a unital $C^\star$-algebra, and let $\delta: A \rightarrow \A $ be a continuous linear map. Assume that 
\begin{equation*}
 ab^{\star}=b^{\star} a=0 \Longrightarrow a    \delta(b)^{\star}+\delta(a)   b^{\star}=\delta(b)^{\star}    a + b ^{\star}    \delta(a)=0\quad (a , b  \in A).
 \end{equation*}
Then there are continuous $ \star $-derivations $ d_1 , d_2 : A \rightarrow \A $ and an element $ \eta \in \A $ with $ Re \eta \in \mathcal{Z} ( A) $ such that $\delta(a)= d_1(a) + \eta    a = d_2 (a) + a   \eta $ for all $ a \in A$.
\end{thm}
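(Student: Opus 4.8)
The plan is to peel off a correction term and reduce everything to Johnson's theorem on Jordan derivations. Since $A$ is unital, set $\eta=\delta(1)\in\A$ and define $d_1(a)=\delta(a)-\eta a$ and $d_2(a)=\delta(a)-a\eta$; both are continuous and satisfy $d_1(1)=d_2(1)=0$. The first observation is that writing $c=b^{\star}$ turns the hypothesis into the two implications $ac=ca=0\Rightarrow a\delta(c^{\star})^{\star}+\delta(a)c=0$ and $ac=ca=0\Rightarrow\delta(c^{\star})^{\star}a+c\delta(a)=0$. Hence the continuous bilinear maps $\phi,\psi:A\times A\rightarrow\A$ given by $\phi(a,c)=a\delta(c^{\star})^{\star}+\delta(a)c$ and $\psi(a,c)=\delta(c^{\star})^{\star}a+c\delta(a)$ both vanish on pairs with $ac=ca=0$, so that Lemma \ref{l32} applies to each.

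First I would pin down $\eta$ and the $\star$-behaviour. Applying Lemma \ref{l32} to $\phi$ and to $\psi$ with the central elements $a=c=1$ gives $\phi(b,1)=\phi(1,b)$ and $\psi(b,1)=\psi(1,b)$ for every $b\in A$, which unwind to $\delta(b)-\delta(b^{\star})^{\star}=\eta b-b\eta^{\star}$ and $\delta(b)-\delta(b^{\star})^{\star}=b\eta-\eta^{\star}b$. Comparing these two identities yields $(\eta+\eta^{\star})b=b(\eta+\eta^{\star})$ for all $b\in A$, so by Remark \ref{12} we obtain $Re\,\eta\in\mathcal{Z}(\A)$. Feeding the first identity into $d_1$ and the second into $d_2$ then gives immediately $d_1(b^{\star})^{\star}=d_1(b)$ and $d_2(b^{\star})^{\star}=d_2(b)$, that is, both $d_1$ and $d_2$ are $\star$-maps.

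It remains to prove that $d_1$ and $d_2$ are derivations. A direct computation, in which the $\eta$-terms carry a factor $ab^{\star}$ or $b^{\star}a$ and therefore drop out, shows that $d_1$ still satisfies $ab^{\star}=b^{\star}a=0\Rightarrow a d_1(b)^{\star}+d_1(a)b^{\star}=0$ and that $d_2$ still satisfies $ab^{\star}=b^{\star}a=0\Rightarrow d_2(b)^{\star}a+b^{\star}d_2(a)=0$. Using the $\star$-map property and again putting $c=b^{\star}$, these say precisely that $u\,d_1(w)+d_1(u)w=0$ and $w\,d_2(u)+d_2(w)u=0$ whenever $uw=wu=0$. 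Now I would fix a self-adjoint $x\in A$, let $I$ be a compact interval containing its spectrum, and pass to the continuous bilinear map on $C(I)$ given by $(f,g)\mapsto f(x)d_1(g(x))+d_1(f(x))g(x)$ (and its $d_2$-analogue). Since $fg=0$ forces $f(x)g(x)=g(x)f(x)=0$, this map vanishes whenever $fg=0$, so the full-strength Lemma \ref{l21} applies on the commutative algebra $C(I)$ and yields the associativity relation there; taking $f(s)=g(s)=s$ and $h(s)=1$ produces the Jordan identity $d_1(x^{2})=x\,d_1(x)+d_1(x)x$, and likewise for $d_2$. These extend to all of $A$ by linearity, so by Johnson's theorem \cite{jon} each continuous Jordan derivation $d_1,d_2$ is a derivation; being $\star$-maps, they are $\star$-derivations, and $\delta(a)=d_1(a)+\eta a=d_2(a)+a\eta$ by construction.

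The main obstacle is exactly the passage from the two-sided orthogonality hypothesis to a genuine multiplicative identity. Lemma \ref{l32} is far weaker than Lemma \ref{l21}: it only delivers $\phi(ab,c)=\phi(a,bc)$ when $a,c$ are central, which by itself cannot force a derivation. The device that rescues the argument is the localization to the commutative $C^{\star}$-subalgebra generated by a single self-adjoint element, where Lemma \ref{l21} is available in full and the Jordan identity falls out; the $\star$-symmetry established in the second step is what lets both the derivation relation for $d_1$ and the anti-derivation relation for $d_2$ be rewritten in the functional-calculus-friendly form $u\,d_i(w)\pm d_i(u)w=0$.
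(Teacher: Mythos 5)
Your argument is correct, and while the first half coincides with the paper's, the second half takes a genuinely different route. Up to and including the identities $\delta(b)-\delta(b^{\star})^{\star}=\eta b-b\eta^{\star}=b\eta-\eta^{\star}b$ (the paper's \eqref{III} and \eqref{IV}, obtained there by setting $a=1$ and then $c^{\star}=1$ rather than $a=c=1$ at once), the conclusion $Re\,\eta\in\mathcal{Z}(\A)$, and the fact that $d_1,d_2$ are $\star$-maps, you and the paper do the same thing. Where you diverge is in upgrading $d_1$ and $d_2$ to derivations: the paper verifies that $d_1$ satisfies \emph{both} the derivation-type and the anti-derivation-type two-sided zero-product conditions and then invokes Proposition \ref{p33}, which rests on the Alaminos--Bre\v{s}ar--Extremera--Villena theorem on Jordan maps through zero products \cite[Theorem 4.1]{al10}; you instead keep only one condition for each $d_i$, localize to the commutative $C^{\star}$-subalgebra generated by a self-adjoint element via functional calculus, apply the full-strength Lemma \ref{l21} there to extract the Jordan identity $d_i(x^2)=xd_i(x)+d_i(x)x$, and finish with Johnson's theorem \cite{jon}. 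Your route essentially re-runs the localization device already used in the proof of Lemma \ref{l32}, avoids the dependence on \cite{al10} for this theorem, and needs less information about each $d_i$ (only half of the hypothesis of Proposition \ref{p33}); the paper's route is shorter once Proposition \ref{p33} is in hand and keeps the two-sided results organized around that proposition. One small point of precision: the passage from the Jordan identity on self-adjoint elements to all of $A$ is by polarization over $x+y$ with $x,y$ self-adjoint followed by splitting $a=x+iy$, not by linearity alone, but this is routine and does not affect the correctness of your proof.
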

\begin{proof}
Define continuous bilinear maps $\phi, \psi:A \times A \rightarrow \A$ by 
\[\phi(a , b )= a    \delta(b^\star)^\star+\delta(a)    b  \, \,\, \text{and} \,\,\, \psi(a , b )= \delta(b^\star)^\star   a + b   \delta(a) \]
 It is easily seen that $ \phi (a , b)= 0  $ and $ \psi (a , b ) = 0 $, whenever $ a,b \in A $ are such that $ ab= ba=0 $. By Lemma \ref{l32} we have
\begin{equation} \label{I}
ab   \delta (c^\star)^\star + \delta (ab)   c = a   \delta (c^\star b^\star)^\star + \delta (a)   bc ,
\end{equation}
\begin{equation} \label{II}
\delta (c^\star)^\star    ab + c   \delta (ab)  =  \delta (c^\star b^\star)^\star    a + bc    \delta (a) 
\end{equation}
for all $ a , c \in \mathcal{Z} (A) $ and $ b \in A $. Now letting $a = 1$ and $ \eta = \delta (1) $ in \eqref{I} and \eqref{II}, we obtain
\[ b   \delta (c^\star)^\star + \delta (b)   c = \delta (c^\star b^\star)^\star + \eta   bc , \]
\[ \delta (c^\star)^\star   b + c   \delta (b)  =  \delta (c^\star b^\star)^\star  + bc   \eta \]
for all $ c \in \mathcal{Z} (A) $ and $ b \in A $. By applying the $\star$ on above equations, and setting $ c^\star = 1 $, we arrive at
\begin{equation} \label{III}
\eta    b^\star + \delta (b)^\star = \delta (b^\star) + b^\star   \eta ^\star ,
\end{equation}
and
\begin{equation} \label{IV}
b^\star   \eta + \delta (b)^\star  = \delta ( b^\star)  + \eta^\star   b^\star,
\end{equation}
for all $ b\in A $. By \eqref{III} and \eqref{IV}, we get $ ( \eta + \eta^\star )   b^\star = b^\star (\eta + \eta^\star) $ for all $ b\in A $. Therefore by Remark \ref{12}, $ Re \eta \in \mathcal{Z} (\A) $. Define the map $ d_1:A \rightarrow \A $ by $ d_1(a) = \delta (a) - \eta   a $. Then $d_1$ is a continuous linear map, and by \eqref{III} we have $ d_1 (a^\star) = d_1 (a)^\star $ for all $ a \in A $. So $d$ is a $ \star $-map. If $ ab = ba = 0  $, then by hypothesis, definition of $d_1$ and the fact that $d_1$ is a $ \star $-map and $ Re \eta \in \mathcal{Z} (\A) $, we have 
\begin{equation*}
\begin{split}
a    d_1(b)+d_1(a )    b &= a    d_1(b^{\star})^{\star}+d_1(a)     b \\ &=
a     (\delta(b^{\star})-\eta    b^{\star})^{\star}+(\delta(a)-\eta    a)    b=0,
\end{split}
\end{equation*}
and 
\begin{equation*}
\begin{split}
b    d_1(a)+ d_1(b)    a &= b    d_1(a)+d_1 (b^{\star})^{\star}     a \\ 
&= b     (\delta(a)-\eta    a)+(\delta(b^{\star})-\eta    b^{\star})^{\star}    a\\
&= -b     \eta    a - b    \eta^{\star}    a=-b a    (\eta+\eta^{\star})=0.
\end{split}
\end{equation*}
So $d_1$ satisfies in conditions of Proposition \ref{p33} and hence there exist a continuous derivation $ d: A \rightarrow \A $ and an element $ \eta \in \mathcal{Z} (\A) $ such that 
\[ d_1 (a) = d (a) + \eta  a ~~~~ (a \in A) . \]
By definition of $d_1$, we have $ d_1 (1) = 0  $, on the other hand $d$ is a derivation, so $ d (1) = 0  $. Hence $ \eta = 0 $ and $ d = d_1 $. Hence $d_1$ is a $ \star $-derivation and so
\[ \delta (a) = d_1 (a) + \eta   a ~~~~ (a \in A) . \]
with $ Re \eta \in \mathcal{Z} (\A) $. By defining  $ d_2 (a) = \delta (a) - a   \eta $ and by using similar arguments as above, it follows that $ d_2 $ is a continuous $ \star $-derivation.
\end{proof}
If $ \delta(1) \in \mathcal{Z} (\A)  $, it is obvious that $d_1 = d_2$ in above theorem. Indeed, in this case there is a  derivation $ d: A \rightarrow \A $ such that $ \delta(a) = d (a) + \delta (1)   a $ for all $ a \in A $. Note that in this theorem, it is not necessary true that $ \eta \in \mathcal{Z} ( \A) $. For example suppose that $ \eta \in \A $, $ \eta $ is not in $ \mathcal{Z} ( \A) $ and $ Re\eta \in \mathcal{Z} (\A ) $. Define $ \delta : A \rightarrow \A $ by $ \delta (a) = \eta   a  $. Then $ \delta $ satisfies in conditions of above theorem, but $ \delta $ equals to sum of the zero derivation and $ \eta   a  $, while $ \eta $ is not belong to $ \mathcal{Z} ( \A) $.
\par
In the next proposition we consider von Neumann algebras or simple $ C^{\star} $-algebras with unity.
\begin{prop} \label{c53}
Let $ A $ be a von Neumann algebra or a simple $ C^{\star} $-algebra with unity. Suppose that $ \delta : A \rightarrow A $ is a continuous linear map. Then 
\begin{enumerate}
\item[(i)] $ \delta $ satisfies 
\[  a b = b a =0 \Longrightarrow a   \delta(b)+\delta(a)    b =b    \delta(a)+\delta(b)    a =0\quad (a , b  \in A).\]
if and only if there are elements $\mu,\nu \in \A$ such that $\delta(a)=a     \mu-\nu    a $ for all $a \in A$, where $\mu-\nu\in \mathcal{Z}(\A)$.
\item[(ii)] $ \delta $ satisfies 
\[ ab^{\star}=b^{\star} a=0 \Longrightarrow a    \delta(b)^{\star}+\delta(a)   b^{\star}=\delta(b)^{\star}    a + b ^{\star}    \delta(a)=0\quad (a , b  \in A).
 \]
if and only if there are elements $\mu, \nu\in \A$ such that $\delta(a)=a     \mu-\nu    a $ for all $ a  \in  A $, where $Re\mu\in \mathcal{Z}(\A)$ and $Re(\mu-\nu)\in \mathcal{Z}(\A)$.
\end{enumerate}
\end{prop}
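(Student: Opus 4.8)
The plan is to deduce each part from the structure theorems already proved in this section---Proposition \ref{p33} for $(i)$ and Theorem \ref{t43} for $(ii)$---combined with the classical fact (\cite[Theorem 4.1.6]{sak} and \cite[Theorem 4.1.11]{sak}) that every derivation of a von Neumann algebra or a unital simple $C^\star$-algebra is inner. Throughout, the essential bridge is that although those theorems produce maps valued in $\A$, the unitality of $A$ forces everything back into $A$.

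For $(i)$ I would first regard $\delta$ as a continuous linear map into $\A$ and apply Proposition \ref{p33} to obtain a continuous derivation $d:A\rightarrow\A$ and $\eta\in\mathcal{Z}(\A)$ with $\delta(a)=d(a)+\eta a$. Since $A$ is unital and $d$ is a derivation, $d(1)=0$, so $\eta=\delta(1)$ lies in $A$; consequently $d(a)=\delta(a)-\eta a\in A$ for every $a$, whence $d:A\rightarrow A$ is a genuine derivation and, by Remark \ref{12}, $\eta\in\mathcal{Z}(A)\subseteq\mathcal{Z}(\A)$. By Sakai's theorem $d$ is inner, say $d(a)=a\mu-\mu a$ with $\mu\in A$, and putting $\nu=\mu-\eta$ gives $\delta(a)=a\mu-\nu a$ with $\mu-\nu=\eta\in\mathcal{Z}(\A)$.

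For $(ii)$ the argument is parallel but uses Theorem \ref{t43} (here the required unitality of $A$ is available), yielding a continuous $\star$-derivation $d_1:A\rightarrow\A$ and $\eta=\delta(1)$ with $Re\eta\in\mathcal{Z}(\A)$ such that $\delta(a)=d_1(a)+\eta a$. The same unital reduction shows $\eta\in A$ and $d_1:A\rightarrow A$ is a $\star$-derivation, hence inner; Remark \ref{der} then supplies $\mu\in A$ with $Re\mu\in\mathcal{Z}(A)$ and $d_1(a)=a\mu-\mu a$. Setting $\nu=\mu-\eta$ produces $\delta(a)=a\mu-\nu a$ with $Re\mu\in\mathcal{Z}(A)\subseteq\mathcal{Z}(\A)$ and $Re(\mu-\nu)=Re\eta\in\mathcal{Z}(\A)$, as required.

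Both converses I would check by direct computation. In $(i)$, if $ab=ba=0$ then $a\delta(b)+\delta(a)b=a(\mu-\nu)b=(\mu-\nu)ab=0$ using centrality of $\mu-\nu$, and symmetrically $b\delta(a)+\delta(b)a=(\mu-\nu)ba=0$. In $(ii)$, writing $Re\nu=Re\mu-Re(\mu-\nu)$ so that $Re\nu$ is also central, if $ab^\star=b^\star a=0$ then $a\delta(b)^\star+\delta(a)b^\star=a(\mu+\mu^\star)b^\star=2Re\mu\, ab^\star=0$ and $\delta(b)^\star a+b^\star\delta(a)=-b^\star(\nu+\nu^\star)a=-2Re\nu\, b^\star a=0$. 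The main obstacle is not any single hard estimate but the bookkeeping: one must justify that the abstractly produced (anti-)derivations take values in $A$ rather than merely in $\A$---this is exactly where $\delta(1)\in A$ together with $d(1)=0$ is used---and then track how the reality and centrality conditions transform under the substitution $\nu=\mu-\eta$, which in $(ii)$ requires controlling $Re\mu$ and $Re(\mu-\nu)$ simultaneously.
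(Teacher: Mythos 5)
Your argument is correct and follows essentially the same route as the paper: Proposition \ref{p33} (resp.\ Theorem \ref{t43}) plus the unital reduction $\eta=\delta(1)\in A$, $d(1)=0$, Sakai's innerness theorem, and Remark \ref{der} for the $\star$-case, with the converses checked by the same direct centrality computations. No gaps.
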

\begin{proof}
$ (i) $ By Proposition \ref{p33}, there exist a continuous derivation $ d : A \rightarrow \A $ and an element $ \eta \in \mathcal{Z} (\A) $ such that 
\[ \delta (a) = d (a) + \eta   a ~~~~ (a \in A) . \]
By using similar arguments as above, it follows that $ \eta \in \mathcal{Z} (A) $ and $ d : A \rightarrow A $ is a derivation. So $ d $ is inner, and $ d (a) = a \mu - \mu a $ for all $ a \in A $ whenever $ \mu \in A $. Setting $ \nu = \mu - \eta $. Thus $ \delta (a) = a \mu - \nu a $ for all $ a \in A $ and $ \mu - \nu \in \mathcal{Z} (A) $.

The converse is proved easily.
\par 
$ (ii) $ By Theorem \ref{t43}, there exist a continuous $ \star$-derivation $ d_1 : A \rightarrow \A $ and an element $ \eta \in \mathcal{Z} (\A) $ such that $ \delta (a) = d_1(a) + \eta   a $ for all $ a \in A $.
By using similar arguments as above, it follows that $ \eta \in A $ whenever $ Re\eta \in \mathcal{Z} (A) $ and $ d_1 : A \rightarrow A $ is a $ \star$-derivation. Therefore $ d_1 $ is inner, and there is an element $ \mu \in A $ with $ Re \mu \in \mathcal{Z} (A) $ such that  $ d_1 (a) = a \mu - \mu a $ for all $ a \in A $. Taking $ \nu = \mu - \eta $. Thus $ \delta (a) = a \mu - \nu a $ for all $ a \in A $ whenever $ Re \mu \in \mathcal{Z} (A) $ and $Re( \mu - \nu) \in \mathcal{Z} (A) $.
\par 
Conversely, suppose that there are elements $\mu, \nu\in \A$ such that $\delta(a)=a     \mu-\nu    a $ for all $ a  \in  A $, where $Re\mu\in \mathcal{Z}(\A)$ and $Re(\mu-\nu)\in \mathcal{Z}(\A)$.
\par 
By $Re\mu\in \mathcal{Z}(\A)$, for $a,b\in A$ with $ab^{\star}=b^{\star}a=0$ we have
\begin{equation*}
\begin{split}
a\delta(b)^{\star}+\delta(a)b^{\star} &= a(\mu+\mu^{\star})b^{\star} \\
 &= (\mu+\mu^{\star})ab^{\star} \\ 
&= 0.
\end{split}
\end{equation*} 
Also for $a,b\in A$ with $ab^{\star}=b^{\star}a=0$, by $Re\mu\in \mathcal{Z}(\A)$ and $Re(\mu-\nu)\in \mathcal{Z}(\A)$ we arrive at 
\begin{equation*}
\begin{split}
\delta(b)^{\star}a+b^{\star}\delta(a)&= -b^{\star} (\nu + \nu^{\star})a\\
 &=b^{\star}(\mu+\mu^{\star})a -b^{\star} (\nu + \nu^{\star})a \\ 
 &=b^{\star}(\mu-\nu)a +b^{\star}(\mu-\nu)^{\star}a\\
 &=(\mu-\nu)b^{\star}a +(\mu-\nu)^{\star}b^{\star}a\\
&= 0.
\end{split}
\end{equation*} 
\end{proof}
Note that in Proposition \ref{c53} the converses of Results \ref{p33} and \ref{t43} hold, but we do not know the converses of these results are true or not, in general.

\subsection*{Acknowledgment}
The author thanks the referee for careful reading of the manuscript and for helpful suggestions.

\bibliographystyle{amsplain}
\bibliography{xbib}

\end{document}